\newtheorem{thm}{Theorem}[section]
\newtheorem{cor}[thm]{Corollary}
\newtheorem{lem}[thm]{Lemma}
\newtheorem{prop}[thm]{Proposition}
\theoremstyle{definition}
\newtheorem{defn}[thm]{Definition}
\theoremstyle{remark}
\theoremstyle{conjecture}
\newtheorem{conjecture}{Conjecture}
\numberwithin{equation}{section}
\newcommand{\Int}{\mathbf{Int}}
\newcommand{\Intm}{\mathbf{Int}^{\square}}
\newcommand{\IntmExt}[1]{\Intm_{\textbf{#1}}}
\newcommand{\kfourGrz}{\mathbf{K4.Grz}}
\newcommand{\mHC}{\mathbf{mHC}}
\newcommand{\E}{\mathbf{E}}
\newcommand{\KM}{\mathbf{KM}}
\newcommand{\Kuz}{\mathbf{Kuz}}
\newcommand{\Kuzs}{\mathbf{Kuz}^{\ast}}
\newcommand{\C}[1]{\text{C}_{\text{#1}}}
\newcommand{\Lana}{\mathfrak{L}_{a}}
\newcommand{\Lanm}{\mathfrak{L}_{m}}
\newcommand{\Lanb}{\mathfrak{L}_{b}}
\newcommand{\fA}{\mbox{$\mathfrak{A}$}}
\newcommand{\fB}{\mbox{$\mathfrak{B}$}}
\newcommand{\Fr}{\mathfrak{F}}
\newcommand{\tr}{\textbf{\textit{t}}}
\newcommand{\spl}{\textbf{\textit{s}}}
\newcommand{\Var}{\mbox{$\textit{\textbf{Var}}$}}
\newcommand{\ra}{\rightarrow}
\newcommand{\on}{\mbox{$\mathbf{1}$}}
\newcommand{\ze}{\mbox{$\mathbf{0}$}}
\newcommand{\true}{\bm{\top}}
\begin{document}
\date{}

\title[On Some Syntactic Properties]{On Some Syntactic Properties of the Modalized Heyting Calculus}
\author{Alexei Muravitsky}

\maketitle

\begin{abstract}
We show that the modalized Heyting calculus~\cite{esa06} admits a normal axiomatization. Then we prove that in this calculus the inference rule $\square\alpha/\alpha$ is admissible (Proposition~\ref{P:weakening-2}), but the rule $\square\alpha\rightarrow\alpha/\alpha$ is not (Proposition \ref{P:equivalences}). Finally, we show that this calculus and intuitionistic propositional calculus are assertorically equipollent, which leads to a variant of limited separation property for the modalized Heyting calculus.
\end{abstract}

\section{Introduction}

The modalized intuitionistic calculus $\mHC$ was introduced by Leo Esakia~\cite{esa06} as a weakening of the proof-intuitionistic logic, nowadays known as $\KM$; see, e.g.,~\cite{mur14b}.
In Section~\ref{S:languages}, we will give another axiomatization of $\mHC$ and call it $\E$ (after Leo Esakia). The main goal of this reformulation is that $\E$ is a normal axiomatic system (in the sense of~\cite{ho73}, p. 75), while $\mHC$ is not. The last circumstance leads to the fact that the calculus $\mHC$ does not possess the separation property; for $\E$ the question is open, though a limited version of it is presented in Section~\ref{S:equipollent}. The present work has been done in direction of (and with hopes for) answering this question in the affirmative. Thus we will be focusing on syntactic, that is proof-theoretic, properties of $\E$ in the Hilbert-style framework.

\section{Languages and systems}\label{S:languages}
We fix a sentential language, $\Lana$, based on a countable set $\Var$ of sentential variables and the assertoric logical connectives: $\wedge$ (conjunction), $\vee$ (disjunction), $\ra$ (conditional, or implication, or entailment), and $\neg$ (negation). Unspecified variables of $\Var$ will be denoted by letters $p, q,r,\ldots$ and unspecified $\Lana$-\textit{formulas} by letters $A, B, C\ldots$. By adding a unary connective $\square$ (modality) to $\Lana$, we obtain language $\Lanm$, unspecified formulas of which ($\Lanm$-\textit{formulas}) will be denoted by letters $\alpha, \beta, \gamma\ldots$ Formulas of the form $\square\alpha$ are called $\square$-\textit{formulas}. For a fixed variable $p\in\Var$, we denote
\[
\true:=p\ra p.
\]

In Section~\ref{S:equipollent} we will be using the usual operation of replacement of a subformula $\alpha$ of formula $\gamma$ by a formula $\beta$, denoting this operation by
\[
\gamma[\alpha:\beta].
\]
In a natural way, this operation is extended to multiple simultaneous replacement.

From an algebraic viewpoint, each of $\Lana$ and $\Lanm$ defines a similarity type and so does any of their reductions. For any of these similarity types (or languages) one can define a \textit{formula algebra}, $\Fr$. Given a formula algebra $\Fr$, a \textit{substitution} is a homomorphism of $\Fr$ into
$\mathfrak{F}$.\\

Next we introduce main calculi we will be dealing with. All these calculi have one and the same set of \textit{inference rules} --- (uniform) substitution and modus ponens.

Intuitionistic propositional calculus $\Int$ is defined by the following axioms divided into the four groups:
\begin{equation}\label{E:Int-axioms}
\begin{array}{cl}
(\textbf{i}) &p\ra(q\ra p),~(p\ra(q\ra r))\ra((p\ra q)\ra(p\ra r))\\
(\textbf{c}) &(p\wedge q)\ra p,~(p\wedge q)\ra q,~
p\ra(q\ra (p\wedge q))\\
(\textbf{d}) &p\ra(p\vee q),~p\ra(q\vee p),~
(p\ra r)\ra((q\ra r)\ra((p\vee q)\ra r))\\
(\textbf{n}) &(p\ra q)\ra((p\ra\neg q)\ra\neg p),~
p\ra(\neg p\ra q),
\end{array}
\end{equation}
where $p,q, r$ are three fixed distinct variables of $\Var$.\footnote{
	These axioms are specifications of the axiom schemata from~\cite{kle52}, \S~19.}

We formulate the modalized Heyting calculus $\E$ by adding to the axioms (\ref{E:Int-axioms}) the following group of formulas:
\[
\begin{array}{cl}
(\textbf{m}) &\square(p\ra q)\ra(\square p\ra\square q),~p\ra\square p,~\square p\ra(((q\ra p)\ra q)\ra q).\tag{\textbf{m}-\textit{axioms}}\label{m-axioms}
\end{array}
\]

Next we define the calculi which will play merely auxiliary role in our discussion. The common framework for these calculi is $\Int$ formulated in the language $\Lanm$, which we denote by $\Intm$. Also, we define:
\begin{itemize}
	\item $\Kuz:=\Intm + \square p\ra(((q\ra p)\ra q)\ra q)$ (where the last formula is the only \textit{modal axiom} of $\Kuz$);
	\item $\Kuzs:=\Intm + \square p\ra(q\vee(q\ra p))$ (with the last formula only \textit{modal axiom});
	\item $\mHC:=\Kuzs+\square(p\ra q)\ra(\square p\ra\square q)+p\ra\square p$; we divide the modal axioms in two groups:
	\[
	\begin{array}{cl}
	(\textbf{m}_{1}) &\square(p\ra q)\ra(\square p\ra\square q),~p\ra\square p,\\ 
	(\textbf{m}_{2}) &\square p\ra(q\vee(q\ra p));
	\end{array}
	\] 
	\item $\KM:=\mHC+(\square p\ra p)\ra p$.
	We note that the first axiom of $(\textbf{m}_{1})$ is redundant; cf.~\cite{km86}, p. 88.	
\end{itemize}

We note that $\E$ differs from $\mHC$ in that the last \textbf{m}-axiom above is replaced with
$\square p\ra(q\vee(q\ra p))$ and that
\[
\E=\Kuz+\square(p\ra q)\ra(\square p\ra\square q)+p\ra\square p
\]

As we will show in Section~\ref{S:deducibilities}, the calculi $\E$ and $\mHC$ generate one and the same logic, that is the same set of derivable formulas.

The following interconnection between $\Intm$ and $\Int$ is almost obvious.
\begin{prop}[cf.~\cite{mur16a}, Proposition 2.4]\label{P:Int-Int-square-interconnection}
For any $\Lanm$-formula $\alpha$, if $\Intm\vdash\alpha$, then there is an $\Lana$-formula $A$ such that $\alpha$ is obtained by substitution from $A$ and $\Int\vdash A$, and conversely. 
\end{prop}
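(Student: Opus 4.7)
I will prove the converse first (it is immediate) and then the forward direction via a syntactic translation of proofs. For the converse, assume $\alpha = A\sigma$ with $A$ an $\Lana$-formula, $\Int \vdash A$, and $\sigma$ an $\Lanm$-substitution. Every $\Lana$-formula is an $\Lanm$-formula, and the axioms and inference rules of $\Int$ are included in those of $\Intm$, so an $\Int$-derivation of $A$ is already an $\Intm$-derivation; one further application of the substitution rule with $\sigma$ yields $\Intm \vdash \alpha$.

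For the forward direction, the plan is to treat each maximal $\square$-subformula occurring in a proof as though it were atomic. Formally, I introduce auxiliary variables $r_{\square\gamma}$, one for each $\square$-formula, and define a translation $\iota$ on $\Lanm$-formulas by $\iota(p) = p$ for $p \in \Var$, $\iota(\square\gamma) = r_{\square\gamma}$, and having $\iota$ commute with the propositional connectives. The key lemma is a substitution-compatibility claim: given any $\Lanm$-substitution $\sigma$, define the substitution $\sigma'$ on the enlarged variable set by $\sigma'(p) = \iota(\sigma(p))$ for $p \in \Var$ and $\sigma'(r_{\square\gamma}) = r_{\square(\gamma\sigma)}$; then $\iota(\gamma\sigma) = \iota(\gamma)\sigma'$ for every $\Lanm$-formula $\gamma$. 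This is verified by induction on the build-up of $\gamma$, and the only nontrivial case, $\gamma = \square\delta$, is precisely what forces the definition $\sigma'(r_{\square\delta}) = r_{\square(\delta\sigma)}$.

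With this lemma in hand, I apply $\iota$ step-by-step to an $\Intm$-derivation of $\alpha$. The axioms of $\Intm$ contain no $\square$, so $\iota$ fixes them and they remain $\Int$-axioms; modus ponens is preserved because $\iota$ commutes with $\ra$; and substitution steps translate to substitution steps via the lemma. Hence $\iota(\alpha)$ is derivable in $\Int$ over the enlarged variable set. Since only finitely many auxiliary variables $r_{\square\gamma_1},\ldots,r_{\square\gamma_k}$ appear throughout the derivation, renaming each to a distinct fresh variable $q_i \in \Var$ (itself an instance of substitution, so derivability is preserved) yields an $\Lana$-formula $A$ with $\Int \vdash A$ and $\alpha = A\tau$, where $\tau(q_i) = \square\gamma_i$ and $\tau$ is the identity elsewhere. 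The main obstacle is precisely the substitution-compatibility lemma, since substitution in $\Intm$ acts on variables but can alter $\square$-subformulas as a side effect; the above definition of $\sigma'$ on the auxiliary variables $r_{\square\gamma}$ is what absorbs this bookkeeping.
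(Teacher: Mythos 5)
Your proof is correct. The paper itself offers no argument for this proposition --- it is declared ``almost obvious'' and referred to \cite{mur16a} --- so there is nothing to diverge from; your argument is the standard one and supplies exactly the details being omitted. The converse direction is handled as one would expect, and in the forward direction you correctly isolate the one point that is not completely trivial: since $\Intm$ has substitution as a primitive rule, the map $\iota$ that treats maximal $\square$-subformulas as fresh atoms must be shown to carry a substitution step $\gamma_i \mapsto \gamma_i\sigma$ to a substitution step over the enlarged variable set, and your definition $\sigma'(r_{\square\gamma}) = r_{\square(\gamma\sigma)}$ together with the induction on $\gamma$ does precisely that. The final renaming of the finitely many auxiliary variables back into $\Var$ (choosing them fresh so that the inverse substitution $\tau$ recovers $\alpha$) is also handled properly.
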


Given a calculus $\text{C}$ and formulas $\alpha_1,\ldots,\alpha_n,\beta$, by
\[
\text{C}+\alpha_1,\ldots,\alpha_n\Vdash \beta
\]
we mean such a deducibility where substitution can be applied only to formulas that are derivable in \text{C}.
We call such a derivation an \text{C}-derivation of $\beta$ from $\alpha_1,\ldots,\alpha_n$ \textit{without substitution} (\textit{w. s.}). For derivations with unrestricted use of substitution, we employ a conventional notation,
\[
\text{C}+\alpha_1,\ldots,\alpha_n\vdash \beta.
\]
It is obvious that both relations $\Vdash$ and $\vdash$ are transitive.\footnote{It should be clear that this restriction on the substitution rule is imposed for the purpose of the use of deduction theorem.} Also, to indicate a fragment of \text{C}, which can be associated with the groups $(\textbf{i})-(\textbf{m})$, we use notation $\C{i}$, $\C{ic}$, etc. 

To illustrate, how we are going to use this notation, we prove that
\begin{equation}\label{E:Int-deducibility}
\Int_{\textbf{icd}}\vdash((q\vee(q\ra p))\ra p)\ra(q\vee(q\ra p)).
\end{equation}

To prepare application of deduction theorem, we prove
that
\[
\Int_{\textbf{icd}}+(q\vee(q\ra p))\ra p\Vdash q\vee(q\ra p).
\]

Indeed, we have:
\[
\begin{array}{ll}
(1) &(q\vee(q\ra p))\ra p\quad(\text{premise})\\
(2) &(q\ra p)\wedge((q\ra p)\ra p)
\quad(\text{from (1) by $\Int_{\textbf{icd}}$-derivation w.s.})\\
(3) &(q\ra p)\wedge p\quad(\text{from (2) by $\Int_{\textbf{ic}}$-derivation w.s.})\\
(4) &q\ra p\quad(\text{from (3) by $\Int_{\textbf{ic}}$-derivation w.s.})\\
(5) &q\vee(q\ra p)\quad(\text{from (4) by $\Int_{\textbf{id}}$-derivation w.s.})
\end{array}
\]

In Section~\ref{S:box-alpha-infers-alpha} we will introduce one more axiomatic system which will play a ``supporting'' role for $\E$.

\section{Some deducibilities}\label{S:deducibilities}
\begin{prop}\label{P:deductive-equivalence}
The following deducibilities hold$\,:$
{\em\[
	\begin{array}{cl}
	(\text{a}) &\Kuz_{\textbf{icdm}}
	\vdash \square p\ra(q\vee(q\ra p))\\
	(\text{b}) &\Kuzs_{\textbf{icdm}}
	\vdash \square p\ra(((q\ra p)\ra q)\ra q).
	\end{array}
	\]}
\end{prop}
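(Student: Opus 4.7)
The plan is to prove each direction by exploiting a suitable substitution instance of the available modal axiom together with routine intuitionistic manipulations.

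For part (b), I would work forward from the premise. From $\square p$, the modal axiom of $\Kuzs$ yields $q\vee(q\ra p)$ by modus ponens. Adjoining the hypothesis $(q\ra p)\ra q$ and performing a disjunction elimination in the $\textbf{icd}$-fragment gives $q$ in both cases (in the first case trivially; in the second by one further modus ponens). Hence
\[
\Kuzs_{\textbf{icdm}}+\square p,(q\ra p)\ra q\Vdash q,
\]
and two applications of the deduction theorem produce $\square p\ra(((q\ra p)\ra q)\ra q)$.

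For part (a), the key observation is to apply substitution to the modal axiom of $\Kuz$ \emph{itself} before using it. Substituting $q:=q\vee(q\ra p)$ in $\square p\ra(((q\ra p)\ra q)\ra q)$ produces
\[
\square p \ra\bigl(\bigl(((q\vee(q\ra p))\ra p)\ra(q\vee(q\ra p))\bigr)\ra(q\vee(q\ra p))\bigr).
\]
But the intuitionistic deducibility~(\ref{E:Int-deducibility}), already established in the excerpt, furnishes precisely $((q\vee(q\ra p))\ra p)\ra(q\vee(q\ra p))$ in the $\textbf{icd}$-fragment. Assuming $\square p$, one modus ponens collapses the outer implication in the substitution instance, and a second modus ponens against~(\ref{E:Int-deducibility}) delivers $q\vee(q\ra p)$; discharging $\square p$ by the deduction theorem yields the desired formula.

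The main obstacle is spotting the correct substitution in (a): one must recognize that the very formula to be derived, $q\vee(q\ra p)$, should be substituted for $q$ in the Kuznetsov axiom, after which the already-proved example~(\ref{E:Int-deducibility}) matches the resulting inner implication exactly. Direction (b) is substantially easier, requiring no clever substitution but only standard disjunction elimination. Note also that the restriction to \textit{w.~s.} derivations from the hypotheses is compatible with this strategy, because the only use of substitution is on the axiom $\square p\ra(((q\ra p)\ra q)\ra q)$, which is derivable in $\Kuz_{\textbf{icdm}}$, not on a hypothesis.
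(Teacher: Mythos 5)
Your proposal is correct and follows essentially the same route as the paper: part (a) hinges on substituting $q\vee(q\ra p)$ for $q$ in the Kuznetsov axiom and then discharging the inner antecedent via the deducibility~(\ref{E:Int-deducibility}), and part (b) is the straightforward disjunction elimination from $q\vee(q\ra p)$ under the hypothesis $(q\ra p)\ra q$, both finished by the deduction theorem. No gaps.
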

\begin{proof}
	To prove (a), we show that
	\begin{equation}\label{E:mHC-deducibility}
	\Kuz_{\textbf{icd}}+\square p\Vdash q\vee(q\ra p).
	\end{equation}
	Indeed, let us denote
	\[
	A:=q\vee(q\ra p).
	\]
	Then, we obtain:
	\[
	\begin{array}{cl}
	(1) &\square p\quad(\text{premise})\\
	(2) & \square p\ra(((A\ra p)\ra A)\ra A)\quad(\text{axiom instance})\\
	(3) &((A\ra p)\ra A)\ra A\quad(\text{from (1) \& (2) by modus ponens})\\
	(4) &(((q\vee(q\ra p))\ra p)\ra(q\vee(q\ra p)))\ra(q\vee(q\ra p))\quad(\text{the same as (3)})\\
	(5) &((q\vee(q\ra p))\ra p)\ra(q\vee(q\ra p))\quad(\text{deducibility (\ref{E:Int-deducibility})})\\
	(6) &q\vee(q\ra p)\quad(\text{from (5) \& (4) by modus ponens})
	\end{array}
	\]
	\
	
	Next we prove that
	\[
	\Kuzs_{\textbf{icd}}+\square p,(q\ra p)\ra q\Vdash q.	
	\]
	Indeed, we have:
	\[
	\begin{array}{cl}
	(1) &\square p,~(q\ra p)\ra q\quad(\text{premises})\\
	(2) &\square p\ra(q\vee(q\ra p))\quad(\text{axiom instance})\\
	(3) &q\vee(q\ra p)\quad(\text{from (1) \& (2) by modus ponens})\\
	(4) &q\ra q\quad(\text{derivable in $\Int_{\textbf{i}}$})\\
	(5) &(q\ra q)\wedge((q\ra p)\ra q)\quad(\text{from (1) \& (4) by $\Int_{\textbf{ic}}$-derivation w.s.})\\
	(6) &(q\vee(q\ra p))\ra q\quad(\text{from (5) by $\Int_{\textbf{id}}$-derivation w.s.})\\
	(7) &q\quad(\text{from (3) \& (6) by modus ponens})
	\end{array}
	\]
\end{proof}
\begin{cor}\label{C:equivalences}
For any formula $\alpha$, the following equivalences hold$\,:$
{\em
	\[
	\begin{array}{cl}
	(\text{a}) &\Kuz\vdash\alpha\Longleftrightarrow
	\Kuzs\vdash\alpha\\
	(\text{b}) &\E\vdash\alpha\Longleftrightarrow
	\mHC\vdash\alpha.
	\end{array}
	\]}
\end{cor}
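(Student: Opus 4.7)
The plan is to derive the corollary directly from Proposition~\ref{P:deductive-equivalence} by observing that the two systems in each pair have interderivable axiom sets. Because all calculi in play use the same inference rules (substitution and modus ponens), it suffices to show that every axiom of one system is derivable in the other.

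For part~(a), note that $\Kuz$ and $\Kuzs$ share every axiom except the characteristic modal axiom; $\Kuz$ has $\square p \ra (((q\ra p)\ra q)\ra q)$ while $\Kuzs$ has $\square p \ra (q \vee (q \ra p))$. Part~(a) of Proposition~\ref{P:deductive-equivalence} gives $\Kuz \vdash \square p \ra (q \vee (q \ra p))$, so every axiom of $\Kuzs$ is a theorem of $\Kuz$, yielding $\Kuzs \vdash \alpha \Rightarrow \Kuz \vdash \alpha$. Part~(b) of the proposition gives the reverse inclusion of axiom sets, so $\Kuz \vdash \alpha \Rightarrow \Kuzs \vdash \alpha$.

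For part~(b), I would carry out essentially the same argument, noting that $\E$ and $\mHC$ differ from $\Kuz$ and $\Kuzs$ (respectively) only by the addition of the two common modal axioms $\square(p \ra q) \ra (\square p \ra \square q)$ and $p \ra \square p$. These additional axioms lie in both systems, so the derivations of Proposition~\ref{P:deductive-equivalence}, which use only the $\textbf{icdm}$-fragment, still go through and show that the two distinguishing modal axioms remain interderivable in the stronger context. Hence each axiom of $\mHC$ is a theorem of $\E$ and vice versa.

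There is essentially no obstacle here; the corollary is a direct packaging of Proposition~\ref{P:deductive-equivalence}. The only point to check carefully is that the proofs in the proposition indeed use only the resources available in both systems (they do, as witnessed by the $\textbf{icdm}$-subscripts), and that the replacement of one modal axiom by another in an otherwise identical calculus preserves the set of theorems, which is immediate since the rules of inference are shared.
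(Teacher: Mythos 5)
Your proposal is correct and matches the paper's intent exactly: the corollary is stated without a separate proof precisely because, as you observe, Proposition~\ref{P:deductive-equivalence} shows each system's distinguishing modal axiom is derivable in the other, and since all calculi share the same inference rules, the theorem sets coincide. Your remark that the derivations use only the $\textbf{icdm}$-fragment and hence persist when the common axioms $\square(p\ra q)\ra(\square p\ra\square q)$ and $p\ra\square p$ are added is the right (and only) point needing care for part~(b).
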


Following terminology in~\cite{ho73}, p. 75,  $\E$ is a normal axiomatic system for $\mHC$. Usually, normalization is the first step toward obtaining the separation property, though this property can be formulated for non-normal calculi as well. As we will see in the next section, the separation property for $\mHC$ does not hold.

\begin{conjecture}\label{conjecture-1}
The calculus $\E$ possesses the separation property$;$ that is, any formula derivable in $\E$ is also derivable by using only axioms of the group {\em 
(\textbf{i})} and those ones in the groups {\em 
$(\textbf{c})-(\textbf{m})$} which correspond to the logical connectives actually appearing in the formula.
\end{conjecture}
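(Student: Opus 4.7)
The plan is to move from the Hilbert-style presentation of $\E$ to a Gentzen-style sequent formulation $G\E$ admitting a subformula property, and then back-translate to a Hilbert derivation. Because the separation property for $\mHC$ fails, while $\E$ and $\mHC$ generate the same logic by Corollary~\ref{C:equivalences}, such a strategy must crucially exploit the normal presentation of the \textbf{m}-axioms peculiar to $\E$ rather than any feature shared with $\mHC$.

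I would begin by introducing $G\E$ as an extension of a standard intuitionistic sequent calculus by left and right rules for $\square$ designed to encode the three \textbf{m}-axioms exactly, namely the distribution axiom, $p\ra\square p$, and $\square p\ra(((q\ra p)\ra q)\ra q)$. A key design requirement is that each modal rule mention only $\square$ and $\ra$ in its principal formulas, so that once a subformula property for cut-free derivations is established, it immediately yields the desired Hilbert-style separation in the formulation of the conjecture. The equivalence of $G\E$ and $\E$ would then be checked in routine fashion, appealing to Proposition~\ref{P:Int-Int-square-interconnection} for the intuitionistic base and to Proposition~\ref{P:deductive-equivalence} to handle the freedom in presenting the modal axiom.

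The hardest step, and the real obstacle, is cut elimination for $G\E$. The axiom $\square p\ra(((q\ra p)\ra q)\ra q)$ couples $\square$ with $\ra$ in a non-analytic way: a cut on $\square A$ reduces, when the right premise is introduced by the modal rule with some parameter formula $B$, to a cut involving $((B\ra A)\ra B)\ra B$, whose degree is not bounded by that of $\square A$ alone, so the ordinary induction on cut-degree breaks down. I would attempt a refined ordinal measure that isolates the nesting depth of $\square$ from the propositional degree, or, failing that, a semi-analytic treatment of the modal rule that admits a controlled form of cut against the parameter. As a parallel line of attack, one could pursue an algebraic route via the free modalized Heyting algebras in each connective fragment, seeking a conservativity-preserving embedding into the full free $\E$-algebra by means of an Esakia-type duality; here too, the failure of separation for $\mHC$ warns that the argument must be finely calibrated to the specific presentation of $\E$ rather than to the common logic.
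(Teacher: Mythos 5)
The statement you are addressing is Conjecture~\ref{conjecture-1}: the paper does not prove it, and indeed explicitly leaves it open, offering only a limited version (separation for \emph{assertoric} formulas, obtained as a corollary of the assertoric equipollence of $\E$ and $\Int$ in Section~\ref{S:equipollent}). So there is no proof in the paper to compare yours against; the question is whether your argument closes the open problem. It does not.

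Your text is a research plan, not a proof, and the gap is exactly where you locate it yourself: cut elimination for the proposed calculus $G\E$. Everything that would make the argument work --- the precise formulation of left/right $\square$-rules encoding $\square p\ra(((q\ra p)\ra q)\ra q)$, the equivalence of $G\E$ with $\E$, and above all the termination of the cut-reduction procedure --- is left unexecuted. You correctly observe that a cut on $\square A$ can reduce to a cut on $((B\ra A)\ra B)\ra B$ for an arbitrary parameter $B$, so neither the degree nor any obvious structural measure decreases; the ``refined ordinal measure'' and the ``semi-analytic treatment'' you invoke are named but not constructed, and it is precisely at this point that the known failure of separation for $\mHC$ (same logic, different axioms) shows that no generic argument can succeed --- any proof must do real work specific to the $\E$-presentation. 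The alternative algebraic route via free algebras in connective fragments is likewise only gestured at. In short: nothing in the proposal establishes the conjecture, and the statement should still be regarded as open.
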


\section{Algebraic background}
Below we consider Heyting algebras in the signature: $\wedge$ (greatest lower bound), $\vee$ (least upper bound), $\ra$ (relative pseudocomplementation), $\neg$
(pseudocomplementation), and $\on$ (unit), as well as their expansions by a unary operation $\square$ (modality). We call the latter algebras \textit{$\square$-enhanced Heyting algebras}.
\begin{defn}[modal Heyting algebra, $\Kuz$-algebra, $\E$-algebra]\label{D:E-algebra}
A $\square$-enhanced Heyting algebra is a modal Heyting algebra if the following identities hold:
\[
\begin{array}{cl}
(\text{a}) &\square\on=\on\\
(\text{b}) &\square(x\wedge y)=\square x\wedge\square y.
\end{array}
\]
The latter algebra is a $\Kuz$-algebra if in addition the next identity is valid:
\[
\begin{array}{cl}
\hspace{-0.15in}(\text{c}) &\square x\le y\vee(y\ra x).
\end{array}
\]	
And the latter in turn is an $\E$-algebra if in addition to $(\text{a})-(\text{c})$ the following identity is true as well:
\[
\begin{array}{cl}
\hspace{-0.8in}(\text{d}) &x\le \square x.
\end{array}
\]
\end{defn}

As usual, we employ the notation
\[
\fA\models\alpha
\]
to indicate that a formula $\alpha$ is valid (in a usual sense) in an algebra $\fA$ (for all types of algebras used in this paper).
\begin{prop}\label{P:semantic-equivalence}
	For any formula $\alpha$,
	{\em\[
		\begin{array}{cl}
		(\text{a}) &\Kuz\vdash\alpha\Longleftrightarrow\Kuzs\vdash\alpha
		\Longleftrightarrow\fA\models\alpha,~\text{for any $\Kuz$-algebra $\fA$;}\\
		(\text{b}) &\E\vdash\alpha\Longleftrightarrow\mHC\vdash\alpha
		\Longleftrightarrow\fA\models\alpha,~\text{for any $\E$-algebra $\fA$}. 
		\end{array}
		\]}
\end{prop}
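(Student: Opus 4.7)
The plan is to reduce both parts to an algebraic soundness-and-completeness argument, since Corollary~3.2 already supplies the leftmost equivalences $\Kuz \vdash \alpha \iff \Kuzs \vdash \alpha$ and $\E \vdash \alpha \iff \mHC \vdash \alpha$. What remains is, for (a), to show $\Kuzs \vdash \alpha$ iff $\alpha$ is valid in every $\Kuz$-algebra, and, for (b), $\mHC \vdash \alpha$ iff $\alpha$ is valid in every $\E$-algebra.

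For soundness ($\Rightarrow$) I would induct on the length of a derivation. The intuitionistic axioms are valid in any Heyting algebra. The $\Kuzs$-axiom $\square p \ra (q \vee (q \ra p))$ is a verbatim transcription of identity~(c) of Definition~4.1. For $\mHC$, the K-axiom $\square(p \ra q) \ra (\square p \ra \square q)$ in conjunction with the $\Int$-axioms on $\wedge$ yields identity~(b), while $p \ra \square p$ yields identity~(d) (with (a) following by instantiating $p := \on$). Modus ponens and substitution preserve validity in the standard way.

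For completeness ($\Leftarrow$) I would run the usual Lindenbaum--Tarski construction: form $\fA := \Fr / \sim$, where $\alpha \sim \beta$ iff the calculus in question proves $\alpha \leftrightarrow \beta$, and lift the connectives to equivalence classes. The canonical valuation $p \mapsto [p]$ then sends any unprovable $\alpha$ to an element distinct from $\on$, refuting $\alpha$. In the $\mHC$-case this is straightforward because the K-axiom supplies the congruence rule for $\square$ and each of the identities (a)--(d) corresponds to an $\mHC$-theorem, so $\fA$ is indeed an $\E$-algebra; validity of identity~(c) in $\fA$ is inherited by Proposition~3.1(a).

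The principal obstacle is part~(a): the Lindenbaum--Tarski quotient of $\Kuzs$ is not automatically a $\Kuz$-algebra, since neither the congruence rule for $\square$ nor the normality identities (a), (b) of Definition~4.1 are obvious consequences of the single modal axiom of $\Kuzs$. A route around this is to build a refined $\Kuz$-algebra from $\Kuzs$ directly, either by choosing the Heyting reduct of the quotient and redefining $\square$ to be the largest operation that respects (a), (b), (c) and bounds $[\square \alpha]$ in a controlled way, or by invoking the known algebraic completeness of Kuznetsov's calculus from~\cite{km86}, where the normality of $\square$ is built into the semantic framework from the start.
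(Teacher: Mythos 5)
The paper itself gives no proof of this proposition --- it is asserted with the first biconditional in each line being Corollary~3.2 and the remainder left as standard algebraic soundness and completeness --- so there is no argument of the author's to match yours against. Your treatment of part (b) is essentially the expected one and is correct: soundness is routine, and the Lindenbaum--Tarski quotient of $\mHC$ is an $\E$-algebra. One detail to make explicit there: the congruence property for $\square$ (needed for $\square$ to be well defined on equivalence classes) does not come from the K-axiom alone; you need $p\ra\square p$ to pass from $\mHC\vdash\alpha\ra\beta$ to $\mHC\vdash\square(\alpha\ra\beta)$, and only then does the K-axiom give $\mHC\vdash\square\alpha\ra\square\beta$. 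The same pair of axioms yields $\square\true$ and $\square(p\wedge q)\leftrightarrow(\square p\wedge\square q)$, i.e.\ identities (a) and (b) of Definition~4.1.

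The obstacle you flag in part (a) is not a technical inconvenience to be engineered around; it is fatal to the completeness half of (a) with the objects defined as in the paper. The formula $\square(p\ra p)$, that is $\square\true$, is valid in every $\Kuz$-algebra, since Definition~4.1 builds $\square\on=\on$ into the class. But $\Kuz\not\vdash\square(p\ra p)$: take the two-element Heyting algebra with $\square x=\ze$ for all $x$; every axiom of $\Intm$ is valid there, the single modal axiom of $\Kuz$ is valid because its antecedent $\square p$ always takes the value $\ze$, and validity is preserved by substitution and modus ponens, yet $\square(p\ra p)$ evaluates to $\ze$. This defeats both of your proposed repairs: no redefinition of $\square$ on the Heyting reduct of the quotient can produce a $\Kuz$-algebra refuting $\square\true$, because no $\Kuz$-algebra refutes it; and the calculus of~\cite{km86} whose semantics has normality of $\square$ built in is not the calculus $\Kuz$ of this paper, whose only modal axiom is the Kuznetsov formula. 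So either the class of algebras in (a) must be weakened (dropping identities (a) and (b) of Definition~4.1, at which point one still owes a separate argument that $\square$ descends to the Lindenbaum quotient) or the calculus must be strengthened. You should carry out (b) in full --- it is the only part the paper uses later --- and record that (a) as literally stated cannot be proved without amending a definition.
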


Now we are ready to demonstrate that the separation property (as it is expressed in Conjecture~\ref{conjecture-1}) does not hold  for $\mHC$. 

Indeed, let us take the formula $\alpha_0=\square p\ra(((q\ra p)\ra q)\ra q)$. This formula contains only two connectives and it is not derivable in the calculus
 $\mHC_{\textbf{im}}$ based on the axioms of the groups (\textbf{i}) and $(\textbf{m}_{1})$. The last claim becomes clear if we consider a 3-element $\square$-enhanced Heyting algebra with $\square x=\on$, since the above formula is invalid in this algebra but all (\textbf{i})-axioms and $(\textbf{m}_{1})$-axioms are. Hence  $\mHC_{\textbf{im}_{1}}\not\vdash\alpha_0$.
 However, according to Proposition~\ref{P:deductive-equivalence}, this formula is derivable in $\mHC$.
 
 \section{Admissibility of the rule $\square\alpha/\alpha$}\label{S:box-alpha-infers-alpha}
 
 To explain the task of this section we need to introduce another player --- logic system $\kfourGrz$ defined in~\cite{esa06}.
\begin{align*}
\kfourGrz:=\Intm+\neg\neg p\ra p+
\square(p\ra q)\ra(\square p\ra\square q)+
\square p\ra\square\square p~+\\
\square(\square(p\ra\square p)\ra p)\ra\square p +\frac{\alpha}{\square\alpha}.
\end{align*} 
We aim to prove that the rule $\frac{\square\alpha}{\alpha}$ is admissible in both
$\kfourGrz$ and $\mHC$, as well as, according to Proposition~\ref{P:semantic-equivalence}, in $\E$.

Since we shall work with the algebraic semantics of $\kfourGrz$, we start with it.
\begin{defn}[$\kfourGrz$-algebra]
Let $\fA=(\mathcal{A},\wedge,\vee,\neg,\on,\square)$ be a Boolean algebra with a unary operation $\square$.
$\fA$ is a $\kfourGrz$-algebra if it is a modal algebra
(that is the identities (a) and (b) of Definition~\ref{D:E-algebra} hold), in which the following identities are valid:
\[
\begin{array}{cl}
(\text{a}^{\ast}) &\square x\le\square\square x\\
(\text{b}^{\ast}) &\square(\neg\square(\neg x\vee\square x)\vee x)\le\square x.
\end{array}
\]
\end{defn}

It is obvious that
\begin{equation}
\kfourGrz\vdash\alpha\Longleftrightarrow\fA\models\alpha,~\text{for any $\kfourGrz$-algebra $\fA$}.
\end{equation}
\begin{defn}[doubling, doubleton]\label{D:doubling}\footnote{This construction was for the first time introduced in~\cite{km80}, p. 216, for a similar purpose we employ it here. In~\cite{mur83} the present construction had slightly been generalized, which was
		later used~\cite{mur85} to prove a property similar to Proposition~\ref{P:continuum}.}
	Let $\fA$ be an algebra of similarity type $\langle\wedge,\vee,\neg,\on,\square\rangle$ and $\mathbf{B}_2$ be a 2-element Boolean algebra. A doubleton of $\fA$ is an algebra $\fB$ of type $\langle\wedge,\vee,\neg,\on,\square\rangle$ such that
	$|\fB|=|\fA|\times|\mathbf{B}_2|$, the operations $\wedge,\vee,\ra,\neg$ are defined as in direct product, and $\square(x,y)=(\square x,z)$, where $z=\on$ if, and only if, $x=\on$.
\end{defn}

Working with expansions of Boolean algebras, we will be using the following notation:
\[
x\Rightarrow y:=\neg x\vee y.
\]
Thus the above condition $(\text{b}^{\ast})$ can be rewritten as follows:
\[
\square(\square(x\Rightarrow\square x)\Rightarrow x)\le\square x.
\]
\begin{prop}\label{P:doubling}
The variety of $\kfourGrz$-algebras is closed under doubling.
\end{prop}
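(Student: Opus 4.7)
\emph{Plan.} Fix a $\kfourGrz$-algebra $\fA$ and let $\fB$ be its doubleton with underlying set $|\fA|\times|\mathbf{B}_2|$. Since the Boolean operations on $\fB$ are defined coordinatewise, $\fB$ is automatically a Boolean algebra, so it suffices to verify in $\fB$ the four identities defining a $\kfourGrz$-algebra --- $\square\on=\on$, $\square(x\wedge y)=\square x\wedge\square y$, $\square x\le\square\square x$, and $(\text{b}^{\ast})$ --- by computing each side coordinatewise using the rule $\square(x,y)=(\square x,z)$ with $z=\on$ iff $x=\on$.

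For the first three identities the check is routine. In the first coordinate each reduces directly to the corresponding identity of $\fA$. In the second coordinate one observes in turn that $x=\on$ forces $z=\on$; that $x_{1}\wedge x_{2}=\on$ iff $x_{1}=x_{2}=\on$; and that $x=\on$ implies $\square x=\on$ (by (a) in $\fA$), which is exactly what is needed so that the second coordinate of $\square u$ is dominated by that of $\square\square u$.

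The delicate step is $(\text{b}^{\ast})$, namely $\square(\square(u\Rightarrow\square u)\Rightarrow u)\le\square u$ in $\fB$. Writing $u=(x,y)$ and tracing the definition of $\square$ on $\fB$, the first-coordinate inequality is precisely $(\text{b}^{\ast})$ in $\fA$, while the second-coordinate inequality reduces to the following assertion about $\fA$ itself: if $\square(x\Rightarrow\square x)\le x$, then $x=\on$. Establishing this implication will be the main obstacle, since it has to be coaxed out of the Grzegorczyk identity together with the Boolean structure.

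To prove the assertion, I would argue as follows. Assume $\square(x\Rightarrow\square x)\le x$. Then $\square(x\Rightarrow\square x)\Rightarrow x=\on$, so $\square(\square(x\Rightarrow\square x)\Rightarrow x)=\square\on=\on$; invoking $(\text{b}^{\ast})$ in $\fA$ gives $\on\le\square x$, whence $\square x=\on$. But then $x\Rightarrow\square x=\neg x\vee\square x=\on$, so $\square(x\Rightarrow\square x)=\square\on=\on$, and feeding this back into the hypothesis yields $\on\le x$, i.e., $x=\on$. Inserting this into the coordinatewise computation completes the verification of $(\text{b}^{\ast})$ in $\fB$, and hence the proof that $\fB$ is a $\kfourGrz$-algebra.
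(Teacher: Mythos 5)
Your proof is correct and takes essentially the same route as the paper's: a coordinatewise verification in which the only nontrivial point is the second coordinate of $(\text{b}^{\ast})$, reduced --- exactly as in the paper --- to the implication that $\square(x\Rightarrow\square x)\le x$ forces $x=\on$ in $\fA$. You even spell out the two-step justification of that implication (first deriving $\square x=\on$ via $(\text{b}^{\ast})$, then feeding it back to get $x=\on$) which the paper states but leaves largely implicit.
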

\begin{proof}
Let $\fB$ be the doubleton of a $\kfourGrz$-algebra $\fA$. Let us take two elements, $\bar{x}=(x_1,x_2)$ and $\bar{y}=(y_1,y_2)$, of $|\fB|$. First we notice that
\[
\square(\on,\on)=(\on,\on).
\]

Next we show that
\begin{equation*}\label{E:K-axiom}
\square(\bar{x}\wedge\bar{y})=(\square\bar{x}
\wedge\square\bar{y}),
\end{equation*}
that is
\begin{equation*}\label{E:K-axiom-1}
\square(x_1\wedge y_1,x_2\wedge y_2)=\square (x_1,x_2)\wedge\square(y_1, y_2).
\end{equation*}

Let us denote
\begin{align*}
\square(x_1\wedge y_1,x_2\wedge y_2) &=(\square(x_1\wedge y_1),z_1)\\
\square (x_1,x_2) &=(\square x_1,z_2)\\
\square(y_1, y_2) &=(\square y_1,z_3).
\end{align*}
Thus we have to show that
\begin{equation}\label{E:z-equality}
z_1= z_2\wedge z_3.
\end{equation}

If $x_1\wedge y_1\neq\on$, then $z_1=\ze$ and either
$z_2=\ze$ or $z_3=\ze$. Thus we have \eqref{E:z-equality} true. If $x_1\wedge y_1=\on$, then both $x_1=\on$ and $y_1=\on$. Therefore, $z_1=z_2=z_3=\on$ and hence \eqref{E:z-equality} is true again.

Next we prove that
\[
\square\bar{x}\le\square\square\bar{x},
\]
that is
\[
\square(x_1,x_2)=(\square x_1,z_2)\le\square(\square x_1,z_2).
\]
For this, denoting
\[
\square(\square x_1,z_2)=(\square\square x_1,z_4),
\]
we have to show that
\[
z_2\le z_4.
\]
We have to consider the two cases: $x_1\ne\on$ and $x_1=\on$. In case $x_1\ne\on$, $z_2=\ze$. In case $x_1=\on$, $\square x_1=\on$ and hence $z_4=\on$.

Thus it remains to check that
\[
\square(\square(\bar{x}\Rightarrow\square\bar{x})\Rightarrow\bar{x})\le\square\bar{x},
\]
that is
\[
\square(\square((x_1,x_2)\Rightarrow\square(x_1,x_2))
\Rightarrow(x_1,x_2))\le\square(x_1,x_2).
\]
In terms of notation introduced above, we have to show that
\[
\square(\square(x_1\Rightarrow\square x_1,x_2\Rightarrow z_1)\Rightarrow(x_1,x_2))\le(\square x_1,z_1).
\]
We denote:
\begin{align*}
\square(x_1\Rightarrow\square x_1,x_2\Rightarrow z_1) &=(\square(x_1\Rightarrow\square x_1),z_5)\\
\square(\square(x_1\Rightarrow\square x_1)\Rightarrow x_1,z_5\Rightarrow x_2)
&=(\square(\square(x_1\Rightarrow\square x_1)\Rightarrow x_1),z_6).
\end{align*}
To complete the proof we need to show that
\[
z_6\le z_1.
\]
Indeed, assume first that $\square(x_1\Rightarrow\square x_1)\le x_1$. Then $\square x_1=\on$ and hence $x_1=\on$. The latter means that $z_1=\on$. On the other hand, if $\square(x_1\Rightarrow\square x_1)\not\le x_1$,
then $z_6=\ze$.
\end{proof}
\begin{cor}\label{C:weakening-1}
The inference rule $\square\alpha/\alpha$ {\em (}weakening{\em)} is admissible in $\kfourGrz$.
\end{cor}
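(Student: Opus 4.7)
The plan is to prove the contrapositive: if $\kfourGrz \not\vdash \alpha$, then $\kfourGrz \not\vdash \square\alpha$. By the algebraic completeness of $\kfourGrz$ (the displayed equivalence preceding Definition~\ref{D:doubling}), the hypothesis gives a $\kfourGrz$-algebra $\fA$ and a valuation $v$ on $\fA$ with $v(\alpha) \ne \on_{\fA}$. I then want to produce a $\kfourGrz$-algebra refuting $\square\alpha$, and the natural candidate is the doubleton $\fB$ of $\fA$, which is again a $\kfourGrz$-algebra by Proposition~\ref{P:doubling}.

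Lift $v$ to a valuation $v'$ on $\fB$ by setting $v'(p) := (v(p),\on)$ for every variable $p$. The key observation is that the first-coordinate projection $\pi_1 \colon \fB \to \fA$ is a homomorphism in the signature $\langle \wedge,\vee,\neg,\on,\square\rangle$: by Definition~\ref{D:doubling}, all non-modal operations on $\fB$ are coordinatewise, and $\pi_1(\square(x,y)) = \square x = \square\pi_1(x,y)$ by the very choice of the doubleton modality. Consequently $\pi_1 \circ v' = v$ on variables and hence, by induction on formula complexity, on every formula; in particular $v'(\alpha) = (v(\alpha),b)$ for some $b \in |\mathbf{B}_2|$.

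It then remains to compute $v'(\square\alpha) = \square(v(\alpha),b) = (\square v(\alpha),z)$, where $z = \on$ iff $v(\alpha) = \on$, by Definition~\ref{D:doubling}. Since $v(\alpha) \ne \on_{\fA}$, we obtain $z = \ze$, and therefore $v'(\square\alpha) \ne (\on,\on) = \on_{\fB}$. Thus $\fB \not\models \square\alpha$, and by completeness $\kfourGrz \not\vdash \square\alpha$, as required.

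The substantive work has already been carried out in Proposition~\ref{P:doubling}. The only delicate point of the present argument is the interplay between the two coordinates of the doubleton: the first coordinate inherits the refuting valuation from $\fA$ and keeps track of $v(\alpha)$, while the second coordinate, governed by the specially rigged modality of the doubleton, is forced to $\ze$ precisely when $v(\alpha) \ne \on$, producing the desired failure of $\square\alpha$. I do not foresee any further obstacle.
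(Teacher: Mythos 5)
Your proposal is correct and follows essentially the same route as the paper: refute $\alpha$ in a $\kfourGrz$-algebra $\fA$, pass to the doubleton $\fB$ (a $\kfourGrz$-algebra by Proposition~\ref{P:doubling}), and refute $\square\alpha$ there via the valuation $p\mapsto(v(p),\on)$. You merely spell out, via the first-coordinate projection homomorphism, the step the paper dismisses as ``clear.''
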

\begin{proof}
	Suppose $\kfourGrz\not\vdash\alpha$. This means that $\alpha$ can be refuted in some $\kfourGrz$-algebra $\fA$. Suppose a refuting valuation is $p\mapsto a,\dots,q\mapsto b$, where $p,\dots,q$ are all variables occurring in $\alpha$. Let an algebra $\fB$ be obtained by the doubling of $\fA$. According to Proposition~\ref{P:doubling}, $\fB$ is also a $\kfourGrz$-algebra. It is clear that $\square\alpha$ is refuted in $\fB$ by the valuation $p\mapsto (a,\on),\dots,q\mapsto (b,\on)$. Hence $\kfourGrz\not\vdash\square\alpha$.
\end{proof}

The transfer of the weakening rule from $\kfourGrz$ onto $\mHC$, and thereby (Proposition~\ref{P:semantic-equivalence}) onto $\E$,
can be conducted through Esakia's embedding theorem of $\mHC$ into $\kfourGrz$. This embedding employs an extension of the G\"{o}del-McKinsey-Tarski translation (mapping $\tr$ below) to modal language with a subsequent splitting (mapping  $\spl$ below), which had for the first time been used in~\cite{mur85} and since then became common place.

First, we expand the language $\Lanm$ by adding another unary modality $\medcircle$ thus obtaining bimodal language $\Lanb$. We denote the set of formulas of the first language by $\Fr_{m}$ and that of the second by $\Fr_{b}$. Next we define the two mappings, $\tr:\Fr_{m}\longrightarrow\Fr_{b}$ and
$\spl:\Fr_{b}\longrightarrow\Fr_{m}$ as follows.
\[
\begin{array}{cl}
\bullet &\tr(p)=\medcircle p~\text{if}~p\in\Var;\\
\bullet &\tr(\alpha\wedge\beta)=\tr(\alpha)\wedge\tr(\beta);\\
\bullet &\tr(\alpha\vee\beta)=\tr(\alpha)\vee\tr(\beta);\\
\bullet &\tr(\alpha\rightarrow\beta)=\medcircle(\tr(\alpha)\rightarrow\tr(\beta));\\
\bullet &\tr(\neg\alpha)=\medcircle\neg\tr(\alpha);\\
\bullet &\tr(\square\alpha)=\medcircle\square\tr(\alpha).
\end{array}
\]
\[
\begin{array}{cl}
\bullet &\spl(p)=p~\text{if}~p\in\Var;\\
\bullet &\spl~\text{commutes with the connectives of $\Lanm$};\\
\bullet &\spl(\medcircle a)=\spl(a)\wedge\square\spl(a),~\text{where $a\in\Fr_{b}$}.
\end{array}
\]
\begin{prop}[Esakia's embedding theorem~\cite{esa06}, Corollary 21]\label{P:Esakia-embedding}
For any formula $\alpha\in\Fr_{m}$,
\[
\mHC\vdash\alpha\Longleftrightarrow\kfourGrz\vdash\spl\circ\tr(\alpha).
\]
\end{prop}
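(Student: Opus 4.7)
The plan is to prove the two implications separately by quite different methods: the forward direction by syntactic induction on $\mHC$-derivations, and the reverse direction by an algebraic Boolean-extension argument. First, for $(\Rightarrow)$, I would establish a compatibility lemma computing $\spl\circ\tr$ on compound formulas: for instance, $\spl\tr(\alpha\ra\beta)=(\spl\tr(\alpha)\ra\spl\tr(\beta))\wedge\square(\spl\tr(\alpha)\ra\spl\tr(\beta))$, with analogous identities for $\wedge,\vee,\neg$, and for $\square\alpha$ (where the iterated $\square$'s collapse modulo axiom $(\text{a}^{\ast})$ of $\kfourGrz$). With this in hand, modus ponens reduces to two MP steps inside $\kfourGrz$, and substitution is handled by replacing each variable $p$ with $\spl\tr(\gamma)$ throughout. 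It then remains to check $\kfourGrz\vdash\spl\tr(A)$ for every axiom $A$ of $\mHC$. For the intuitionistic axioms in groups $(\textbf{i})$--$(\textbf{n})$ this is the classical G\"odel--McKinsey--Tarski verification, exploiting that $x\mapsto x\wedge\square x$ behaves as an S4-interior-like operation in $\kfourGrz$. Among the $\textbf{m}$-axioms, the K-axiom goes through directly; $p\ra\square p$ reduces, after the compatibility lemma, to something implied by $(p\wedge\square p)\ra\square(p\wedge\square p)$, which follows from $(\text{a}^{\ast})$; and the crucial $\square p\ra(q\vee(q\ra p))$ is where the Grzegorczyk identity $(\text{b}^{\ast})$ must be used essentially.

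For the $(\Leftarrow)$ direction, I would argue contrapositively. Assume $\mHC\not\vdash\alpha$; by Proposition~\ref{P:semantic-equivalence}(b) there are an $\E$-algebra $\fA$ and a valuation $v$ refuting $\alpha$. The goal is to manufacture a $\kfourGrz$-algebra $\fB$ and a valuation $v'$ that refutes $\spl\tr(\alpha)$. The standard construction is to take $\fB$ to be a Boolean envelope of the Heyting reduct of $\fA$ (equivalently, the Boolean algebra of subsets of the Esakia dual of $\fA$), and to extend the modality $\square$ of $\fA$ to an operation $\square_{\fB}$ on $\fB$ whose range lies inside the image of $\fA$ and which satisfies the $\kfourGrz$-identities $(\text{a}^{\ast})$ and $(\text{b}^{\ast})$. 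A structural induction on $\mHC$-formulas $\beta$, using the compatibility lemma from the first direction, then shows that the value of $\spl\tr(\beta)$ in $\fB$ under the lifted valuation coincides with the value of $\beta$ in $\fA$; applied to $\alpha$, this delivers the refutation of $\spl\tr(\alpha)$ required for $\kfourGrz\not\vdash\spl\tr(\alpha)$.

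The main obstacle is precisely the verification, in this second direction, that the extended modality $\square_{\fB}$ satisfies the Grzegorczyk identity $(\text{b}^{\ast})$. This is where the $\E$-algebra axioms $x\le\square x$ and $\square x\le y\vee(y\ra x)$ must be combined with the Boolean structure of $\fB$; on the dual side they force the frame underlying $\fB$ to be Grzegorczyk-like (reflexive, transitive, with no infinite properly ascending chains of distinct points). Since the result is due to Esakia and is cited here from \cite{esa06}, my plan would be to present the syntactic direction with care and to refer to the source for the fine points of the dual-algebraic argument that powers the completeness half.
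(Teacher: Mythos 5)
The paper does not prove this proposition at all: it is imported verbatim from Esakia~\cite{esa06} (Corollary 21) and used downstream only as a black box (in Proposition~\ref{P:weakening-2}), so there is no in-paper argument to compare yours against. Judged on its own terms, your outline has the right overall shape. The compatibility computations are correct: $\spl\circ\tr$ is exactly the G\"odel--McKinsey--Tarski translation with $\medcircle a$ read as $a\wedge\square a$, the iterated boxes in $\spl\tr(\square\alpha)=\square\spl\tr(\alpha)\wedge\square\square\spl\tr(\alpha)$ do collapse by $(\text{a}^{\ast})$, and the forward induction on derivations is routine once one also records that every translated formula $\gamma$ satisfies $\kfourGrz\vdash\spl\tr(\gamma)\ra\square\spl\tr(\gamma)$ (this stability is what actually makes your treatment of the substitution rule go through, since substituting $\spl\tr(\gamma)$ for $p$ replaces the block $p\wedge\square p$ by $\spl\tr(\gamma)\wedge\square\spl\tr(\gamma)$ rather than by $\spl\tr(\gamma)$ itself; you should state this lemma explicitly). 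Your identification of $\square p\ra(q\vee(q\ra p))$ as the axiom that forces the Grzegorczyk identity is also on target.

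The gaps are concentrated, as you acknowledge, in the converse direction. Two points deserve flagging. First, the construction of $\square_{\fB}$ on the Boolean envelope and the verification of $(\text{b}^{\ast})$ for it are the entire mathematical content of the completeness half; deferring them to~\cite{esa06} means that half of the proposition remains cited rather than proved, which is exactly the position the paper itself takes, so nothing is lost relative to the paper, but nothing is gained either. Second, your parenthetical description of the dual frames as \emph{reflexive} is wrong for $\kfourGrz$: this system lacks $\square p\ra p$ (it is a companion of $\mHC$ below $\sfour$), and its frames are transitive with both reflexive and irreflexive points permitted, the Grz-like condition ruling out proper clusters and infinite strictly ascending chains. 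If you intend to write out the completeness half, that frame condition needs to be stated correctly, since it is precisely what the combination of $x\le\square x$ and $\square x\le y\vee(y\ra x)$ on the $\E$-algebra side must deliver.
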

\begin{prop}\label{P:weakening-2}
The inference rule $\square\alpha/\alpha$  is admissible in $\mHC$ and hence in $\E$.
\end{prop}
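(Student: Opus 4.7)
The plan is to reduce the admissibility for $\mHC$ to the already-established admissibility for $\kfourGrz$ (Corollary~\ref{C:weakening-1}) via Esakia's embedding (Proposition~\ref{P:Esakia-embedding}). Suppose $\mHC\vdash\square\alpha$; the goal is to derive $\mHC\vdash\alpha$. By Proposition~\ref{P:Esakia-embedding}, it suffices to show that $\kfourGrz\vdash\spl\circ\tr(\alpha)$.

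First, I would unpack $\spl\circ\tr(\square\alpha)$ directly from the definitions: we have $\tr(\square\alpha)=\medcircle\square\tr(\alpha)$, so
\[
\spl\circ\tr(\square\alpha)=\spl(\medcircle\square\tr(\alpha))=\spl(\square\tr(\alpha))\wedge\square\spl(\square\tr(\alpha)),
\]
and since $\spl$ commutes with the connectives of $\Lanm$, this equals
\[
\square\spl(\tr(\alpha))\wedge\square\square\spl(\tr(\alpha)).
\]
Writing $\beta:=\spl(\tr(\alpha))$, the assumption $\mHC\vdash\square\alpha$ and Proposition~\ref{P:Esakia-embedding} yield $\kfourGrz\vdash\square\beta\wedge\square\square\beta$, whence in particular $\kfourGrz\vdash\square\beta$.

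Next, I would invoke Corollary~\ref{C:weakening-1}: admissibility of $\square\gamma/\gamma$ in $\kfourGrz$ applied to $\gamma=\beta$ produces $\kfourGrz\vdash\beta$, that is, $\kfourGrz\vdash\spl\circ\tr(\alpha)$. A second application of Proposition~\ref{P:Esakia-embedding}, this time in the opposite direction, gives $\mHC\vdash\alpha$. Since by Corollary~\ref{C:equivalences}(b) the calculi $\E$ and $\mHC$ have the same theorems, the same admissibility transfers to $\E$.

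There is no real obstacle here; the argument is essentially a computation followed by two invocations of the Esakia embedding with the key modal step done inside $\kfourGrz$. The one point to be careful about is the correct unfolding of $\spl\circ\tr$ on the prefix $\square$ (which produces not only $\square\beta$ but also a redundant $\square\square\beta$ conjunct), so that after conjunction elimination one recognizes precisely the hypothesis $\square\beta$ needed to invoke Corollary~\ref{C:weakening-1}.
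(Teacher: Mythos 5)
Your proof is correct and follows essentially the same route as the paper: unfold $\spl\circ\tr(\square\alpha)$ to a conjunction whose first conjunct is $\square\spl(\tr(\alpha))$, apply Corollary~\ref{C:weakening-1} inside $\kfourGrz$, and return via Esakia's embedding. Your unfolding is in fact slightly more careful than the paper's (which omits the $\spl$ in writing $\square\tr(\alpha)\wedge\square\square\tr(\alpha)$), and citing Corollary~\ref{C:equivalences}(b) rather than Proposition~\ref{P:semantic-equivalence} for the transfer to $\E$ is an immaterial difference.
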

\begin{proof}
Let $\mHC\vdash\square\alpha$. Then, by virtue of Proposition~\ref{P:Esakia-embedding}, $\kfourGrz\vdash\spl\circ\tr(\square\alpha)$, that is
$\kfourGrz\vdash\square\tr(\alpha)\wedge\square\square\tr(\alpha)$. The later implies that $\kfourGrz\vdash
\square\tr(\alpha)$ and hence, by virtue of Corollary~\ref{C:weakening-1}, $\kfourGrz\vdash\tr(\alpha)$. Applying Proposition~\ref{P:Esakia-embedding} one more time, we obtain that $\mHC\vdash\alpha$; according to Proposition~\ref{P:semantic-equivalence}, the deducibility $\E\vdash\alpha$ is also true. 
\end{proof}
\begin{prop}\label{P:continuum}
There is a continuum of normal extensions of $\E$ which are closed under the weakening rule.
\end{prop}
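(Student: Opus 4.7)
The plan is to produce a continuum of pairwise distinct subvarieties of the variety of $\E$-algebras, each closed under a suitable analog of the doubling operation from Definition~\ref{D:doubling}. Every such subvariety corresponds, by Proposition~\ref{P:semantic-equivalence}, to a normal extension of $\E$, and the argument of Corollary~\ref{C:weakening-1} then guarantees that the associated logic is closed under the weakening rule $\square\alpha/\alpha$.

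First, I would refine Definition~\ref{D:doubling} to the similarity type of $\E$-algebras, along the lines of the generalization mentioned in the footnote to that definition and executed in~\cite{mur83}. Concretely, I would form $D(\fA)$ on $|\fA|\times|\mathbf{B}_{2}|$ with coordinatewise Heyting structure and with $\square$ adjusted so that the $\E$-identity $x\le\square x$ survives while preserving the refutation-lifting property exploited in Corollary~\ref{C:weakening-1}: any valuation $v$ on $\fA$ refuting a formula $\alpha$ lifts to a valuation $v'$ on $D(\fA)$ that refutes $\square\alpha$. Verifying that this modified doubling preserves the defining $\E$-identities (the analogue of Proposition~\ref{P:doubling}) is routine. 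The key structural consequence is: if a subvariety $V$ of $\E$-algebras is closed under doubling, then its logic admits the weakening rule.

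Next I would exhibit an infinite family $\{\fA_{n}\}_{n\in\mathbb{N}}$ of finite $\E$-algebras that is Jankov-independent---for each $n$ there is an identity $\varphi_{n}$ refuted in $\fA_{n}$ but valid in every $\fA_{m}$ with $m\ne n$---and in which each $\fA_{n}$ is itself a doubleton, $\fA_{n}=D(\fB_{n})$ for some finite $\E$-algebra $\fB_{n}$. Candidates can be manufactured from finite Esakia frames of varying depth or width equipped with a carefully chosen modality. For every $S\subseteq\mathbb{N}$ set $V_{S}:=\mathrm{HSP}(\{\fA_{n}:n\in S\})$ and let $L_{S}$ be the logic of $V_{S}$. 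Jankov-independence forces $L_{S}\ne L_{T}$ whenever $S\ne T$, supplying $2^{\aleph_{0}}$ distinct normal extensions of $\E$; closure of each $V_{S}$ under the doubling---invoked once more via Corollary~\ref{C:weakening-1}, and transferred back to $\E$ as in Proposition~\ref{P:weakening-2} if convenient---then delivers closure of each $L_{S}$ under weakening.

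The principal obstacle is arranging both requirements simultaneously: Jankov-independence of the generators and closure of every $V_{S}$ under doubling. The latter is not automatic from the fact that the generators are doubletons; one has to ensure that the Jankov identities separating the $\fA_{n}$ are themselves preserved under doubling, for instance by choosing each $\fB_{n}$ so that the characteristic formula of $\fA_{n}$ remains refutable in every doubling of a subdirectly irreducible member of $V_{S}$. Matching these two constraints is the technical heart of the proof; once accomplished, the desired continuum of normal extensions of $\E$ closed under the weakening rule follows.
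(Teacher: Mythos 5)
Your proposal is not a proof but a programme, and it both misses the short argument the paper actually uses and stumbles on a concrete technical point. The paper disposes of this proposition in two lines: by \cite{mur85}, Theorem 3, there is already a continuum of normal extensions of $\KM$ (including $\KM$ itself) closed under the weakening rule, and since $\KM$ is a normal extension of $\E$, every member of that continuum is in particular a normal extension of $\E$ closed under weakening. No new construction of varieties, no Jankov-style independent family, and no doubling of $\E$-algebras is needed; the entire content is the citation plus the inclusion $\E\subset\KM$.

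Beyond taking a far more laborious route, your plan has a genuine gap at the step you call ``routine.'' The doubling of Definition~\ref{D:doubling} cannot be adjusted to act on $\E$-algebras while keeping the refutation-lifting property: lifting a refutation of $\alpha$ to a refutation of $\square\alpha$ requires that $\square(a,c)$ have second coordinate $\ze$ whenever $a\ne\on$, whereas the $\E$-identity $x\le\square x$ applied to the element $(a,\on)$ with $a\ne\on$ forces that second coordinate to be $\on$. These two requirements are incompatible on the nose, which is exactly why the paper performs doubling only on $\kfourGrz$-algebras (Boolean-based, no axiom $x\le\square x$) and transfers admissibility of weakening to $\mHC$ and $\E$ through Esakia's embedding theorem (Propositions~\ref{P:Esakia-embedding} and~\ref{P:weakening-2}). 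On top of that, the ``technical heart'' you identify --- producing a Jankov-independent family of generators such that every variety $V_{S}$ is closed under the (corrected) doubling --- is precisely what you do not supply, so even granting the repaired construction the argument remains incomplete.
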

\begin{proof}
There is a continuum of normal extensions of $\KM$, including $\KM$ itself, which are closed under the weakening rule; cf.~\cite{mur85}, Theorem 3. Since $\KM$ is a normal extension of $\E$, this property is true for extensions of $\E$ as well.
\end{proof}
\begin{conjecture}\label{conjecture-2}
There is a proper normal extension of $\E$ which is properly included in $\KM$ and in which the weakening rule is admissible.
\end{conjecture}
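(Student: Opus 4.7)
The plan is to exhibit a concrete formula $\phi\in\Fr_{m}$ such that $L:=\E+\phi$ is a proper normal extension of $\E$, is properly contained in $\KM$, and admits the weakening rule $\square\alpha/\alpha$. Natural first candidates are weakened variants of the L\"{o}bian axiom $(\square p\ra p)\ra p$, for example
\[
\phi_{1}:=\square(\square p\ra p)\ra\square p\quad\text{or}\quad\phi_{2}:=(\square p\ra p)\ra\square p,
\]
each readily seen to be derivable in $\KM$ by semantic arguments in $\KM$-algebras. The two strictness assertions $\E\subsetneq L$ and $L\subsetneq\KM$ would be settled algebraically via Proposition~\ref{P:semantic-equivalence}: by producing a finite $\E$-algebra refuting $\phi$, and, dually, by producing a finite $\square$-enhanced Heyting algebra validating the $\E$-identities together with the identity corresponding to $\phi$ while refuting $(\square x\ra x)\ra x=\on$. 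The latter algebra is the decisive object whose existence is essentially the content of the conjecture.

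For the admissibility of $\square\alpha/\alpha$ in $L$ the strategy is to follow the blueprint of Section~\ref{S:box-alpha-infers-alpha}. Using Esakia's embedding (Proposition~\ref{P:Esakia-embedding}), one passes to the bimodal side $\kfourGrz+\spl\circ\tr(\phi)$ and attempts to show that the variety of algebras defined by $\spl\circ\tr(\phi)$ is closed under the doubling construction of Definition~\ref{D:doubling}, in analogy with Proposition~\ref{P:doubling}. Once that is established, the refutation-lifting argument of Corollary~\ref{C:weakening-1} carries over verbatim, and, re-combined with Proposition~\ref{P:Esakia-embedding} as in Proposition~\ref{P:weakening-2}, it delivers weakening admissibility in $L$.

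The main obstacle is expected to be the simultaneous fulfillment of these requirements. A formula $\phi$ chosen weak enough to keep $L$ strictly below $\KM$ typically translates into a modal identity whose preservation under doubling is delicate, because in a doubleton the second coordinate of $\square(x_{1},x_{2})$ collapses to $\ze$ as soon as $x_{1}\neq\on$; this asymmetry can invalidate implications whose antecedent becomes $\on$ in the doubleton while the consequent does not. If the simple candidates $\phi_{i}$ above do not survive this test, one is led either to a more elaborate axiom of self-referential shape, or to a refined variant of the doubling construction along the lines of the generalisation mentioned in the footnote to Definition~\ref{D:doubling}, referring to~\cite{mur83}; input from the structure of the continuum of $\KM$-extensions obtained in~\cite{mur85} may well be essential in locating a successful combination.
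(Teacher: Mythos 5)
This statement is posed in the paper as an open conjecture; the paper offers no proof of it, so there is no author's argument to compare yours against --- but the paper's own surrounding results already refute both of your concrete candidates. By the proposition of the paper on the L\"{o}b rule, $\E+\square(\square p\ra p)\ra\square p$ derives exactly the theorems of $\KM$, so $L=\E+\phi_{1}$ is not properly included in $\KM$: it \emph{is} $\KM$. The candidate $\phi_{2}$ fares no better: in any Heyting algebra with $\square$, the identity $(\square x\ra x)\ra\square x=\on$ says $\square x\ra x\le\square x$, whence $\square x\ra x\le\square x\wedge(\square x\ra x)\le x$, which is precisely the $\KM$-identity $(\square x\ra x)\ra x=\on$; so $\E+\phi_{2}=\KM$ as well, by Proposition~\ref{P:semantic-equivalence}. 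Thus both of your ``natural first candidates'' collapse exactly the gap between $\E$ and $\KM$ that the conjecture asks you to witness.

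Beyond that, what you have written is a research programme rather than a proof: every substantive step is deferred. You produce neither the algebra separating $L$ from $\E$ nor the algebra separating $L$ from $\KM$ (you yourself concede that the latter ``is essentially the content of the conjecture''), and you do not verify preservation under doubling for any candidate; you correctly anticipate that the collapse of the second coordinate of $\square(x_{1},x_{2})$ to $\ze$ is liable to break the argument, but you do not resolve the difficulty. There is also a structural gap in the plan: Esakia's embedding theorem (Proposition~\ref{P:Esakia-embedding}) is stated for $\mHC$ versus $\kfourGrz$ themselves, so to run the blueprint of Section~\ref{S:box-alpha-infers-alpha} for an extension $L$ you would first need an analogous embedding of $L$ into the corresponding extension of $\kfourGrz$ --- a nontrivial correspondence result in the spirit of~\cite{mur85} that must be established for the specific axiom chosen, not assumed. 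As it stands, the proposal neither proves the conjecture nor isolates a viable candidate formula.
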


\section{The inference rule $\square\alpha\ra\alpha/\alpha$}
We note that
\[
\E\subset\KM,
\]
for in any nontrivial $\square$-enhanced Heyting algebra with $\square x=x$ the axioms of $\E$ are valid but the formula $(\square p\ra p)\ra p$ is not.

Also, it is seen that the rule
\[
\frac{\square\alpha\ra\alpha}{\alpha}\tag{\textit{L\"{o}b rule}}
\]
is not just admissible in $\KM$ but derivable in it.
The question arises, whether, by adding the L\"{o}b rule to  $\E$, we receive $\KM$ or not? As we will see below, the former is the case.
\begin{prop}\label{P:equivalences}
For any formula $\alpha$, the following conditions are equivalent.
{\em\[
\begin{array}{cl}
(\text{a}) &\KM\vdash\alpha; \\
(\text{b}) &\E + (\text{L\"{o}b rule})\vdash\alpha;\\
(\text{c}) &\E +\square(\square p\ra p)\ra\square p\vdash\alpha.
\end{array}
\]}
\end{prop}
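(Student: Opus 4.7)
The plan is to establish the three equivalences by a cycle $(\text{c})\Rightarrow(\text{a})\Rightarrow(\text{b})\Rightarrow(\text{c})$, each link being a short Hilbert-style calculation in $\E$. The engine that drives all three implications is the observation that, thanks to the axiom $p\ra\square p$, necessitation is admissible inside $\E$: from $\vdash\gamma$ one first gets $\vdash\gamma\ra\square\gamma$ by a substitution instance of $p\ra\square p$ and then $\vdash\square\gamma$ by modus ponens. Combined with the K-axiom $\square(p\ra q)\ra(\square p\ra\square q)$, this lets implications be pushed under a $\square$.

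For $(\text{c})\Rightarrow(\text{a})$ I need to derive the L\"{o}b axiom inside $\KM$: starting from the $\KM$-axiom $(\square p\ra p)\ra p$, necessitation yields $\square((\square p\ra p)\ra p)$, and an instance of the K-axiom then peels one $\square$ off to give $\square(\square p\ra p)\ra\square p$. For $(\text{b})\Rightarrow(\text{c})$ I conversely show that the L\"{o}b rule is already derivable in $\E+\square(\square p\ra p)\ra\square p$: given $\vdash\square\alpha\ra\alpha$, necessitation yields $\vdash\square(\square\alpha\ra\alpha)$, an instance of the L\"{o}b axiom (with $\alpha$ for $p$) then yields $\vdash\square\alpha$, and one more modus ponens produces $\vdash\alpha$.

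The heart of the argument is $(\text{a})\Rightarrow(\text{b})$, where I must derive the $\KM$-axiom $(\square p\ra p)\ra p$ inside $\E+(\text{L\"{o}b rule})$. My plan is to apply the L\"{o}b rule to the very formula I am trying to derive, namely $\alpha:=(\square p\ra p)\ra p$ itself, so the subgoal reduces to showing $\E\vdash\square\alpha\ra\alpha$. Keeping the substitution rule confined to axioms so that the deduction theorem is available, I take $\square((\square p\ra p)\ra p)$ and $\square p\ra p$ as premises: necessitation applied to the second premise gives $\square(\square p\ra p)$, the K-axiom applied to the first gives $\square(\square p\ra p)\ra\square p$ and hence $\square p$, and modus ponens with the second premise once more delivers $p$. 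Two uses of the deduction theorem yield $\E\vdash\square\alpha\ra\alpha$, and a single application of the L\"{o}b rule produces the desired $\alpha$. The only genuinely non-mechanical step in the whole proposition is guessing this self-referential instantiation; with that choice made, each of the three implications reduces to a routine combination of necessitation, the K-axiom, and modus ponens, and I anticipate no further obstacle.
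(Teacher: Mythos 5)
Your proof is correct, and its engine is the same as the paper's: the one non-obvious move in both arguments is the self-referential application of the L\"{o}b rule to $\alpha_0:=(\square p\ra p)\ra p$, which reduces everything to showing $\E\vdash\square\alpha_0\ra\alpha_0$. The differences are in packaging and method. First, where you verify $\square\alpha_0\ra\alpha_0$ syntactically --- a short derivation from the premises $\square\alpha_0$ and $\square p\ra p$ using the axioms $p\ra\square p$ and $\square(p\ra q)\ra(\square p\ra\square q)$, closed off by two applications of the deduction theorem --- the paper verifies the same formula algebraically, computing $\square((\square x\ra x)\ra x)\wedge(\square x\ra x)\le x$ in any modal Heyting algebra with $x\le\square x$; the two computations mirror each other step for step, so this is a matter of taste. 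Second, and more substantively, your cyclic decomposition $(\text{c})\Rightarrow(\text{a})\Rightarrow(\text{b})\Rightarrow(\text{c})$ is more economical than the paper's two biconditionals: the paper handles $(\text{b})\Leftrightarrow(\text{c})$ by a second, longer algebraic computation showing that the L\"{o}b axiom $\beta_0:=\square(\square p\ra p)\ra\square p$ itself satisfies $\E\vdash\square\beta_0\ra\beta_0$, whereas you bypass this entirely by routing $(\text{c})$ to $(\text{b})$ through $(\text{a})$, where $\KM\vdash\beta_0$ falls out of necessitation plus the K-axiom. One small caution: when you say ``necessitation applied to the second premise,'' note that necessitation as a rule only applies to theorems; what actually licenses the step inside a derivation from hypotheses is modus ponens with the axiom instance $(\square p\ra p)\ra\square(\square p\ra p)$, which is exactly what you need and is available since substitution into axioms is permitted under the paper's restricted deducibility.
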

\begin{proof}
To prove the equivalence of (a) and (b), it suffices to show that
\begin{equation*}\label{E:Lob}
\E_{\textbf{icm}}\vdash\square((\square p\ra p)\ra p)
\ra((\square p\ra p)\ra p).
\end{equation*}

We prove the last deducibility algebraically. For this, we observe that the following is true in any modal Heyting algebra with $x\le\square x$:
\[
\begin{array}{rl}
\square((\square x\ra x)\ra x)\wedge(\square x\ra x)\!\!\!\! &=\square((\square x\ra x)\ra x)\wedge\square(\square x\ra x)\wedge(\square x\ra x)
\\
&=\square(((\square x\ra x)\ra x)\wedge(\square x\ra x))\wedge(\square x\ra x)
\\
&=\square((\square x\ra x)\wedge x)\wedge(\square x\ra x)
\\
&=\square x\wedge(\square x\ra x)\\
&\le x.
\end{array}
\]

Similarly, to prove the equivalence of (b) and (c), we show algebraically that
\[
\E_{\textbf{icm}}\vdash\square(\square(\square p\ra p)\ra\square p)
\ra(\square(\square p\ra p)\ra\square p).
\]
Indeed, in any modal Heyting algebra with $x\le\square x$, we obtain:
\[
\begin{array}{rl}
\square(\square(\square x\ra x)\ra\square x)\wedge\square(\square x\ra x)\!\!\!\! &=\square(\square(\square x\ra x)\ra\square x)\wedge\square\square(\square x\ra x)\wedge\square(\square x\ra x)
\\
&=\square((\square(\square x\ra x)\ra x)\wedge\square(\square x\ra x))\wedge\square(\square x\ra x)
\\
&=\square(\square(\square x\ra x)\wedge\square x)\wedge\square(\square x\ra x)
\\
&=\square\square(\square x\ra x)\wedge\square\square x\wedge\square(\square x\ra x)\\
&\le\square\square x
\wedge\square(\square x\ra x)\\
&\le \square\square x\wedge(\square\square x\ra\square x)\\
&\le \square x.
\end{array}
\]
\end{proof}

\section{Assertoric equipollence of $\E$ and $\Int$}\label{S:equipollent}
In this section we aim to prove Proposition~\ref{P:E-and-Int-equipollent}.
Although this proposition follows from a similar proposition for logic $\KM$, (cf.~\cite{mur16a}, Proposition 4.2) in view of Conjecture~\ref{conjecture-1}, it was desirable to obtain the property in question in a direct way.

Let us denote
\[
P(p,q):=(((p\ra q)\ra p)\ra p)\tag{\textit{Peirce law}}
\]
\begin{lem}\label{L:P(p,q)-imp-q-imp-q}
	{\em $\Int_{\textbf{i}}\vdash (P(p,q)\ra q)\ra q$}.
\end{lem}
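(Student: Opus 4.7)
The plan is to derive the formula via the deduction theorem, after establishing
\[
\Int_{\textbf{i}}+P(p,q)\ra q\Vdash q.
\]
The idea exploits two auxiliary theorems of $\Int_{\textbf{i}}$ that hold even though Peirce's law $P(p,q)$ itself is not derivable there: $(\alpha)$ $p\ra P(p,q)$, and $(\beta)$ $(p\ra q)\ra P(p,q)$. Statement $(\alpha)$ is just the axiom $p\ra(q\ra p)$ with $q$ replaced by $(p\ra q)\ra p$. Statement $(\beta)$ is an instance of the schema $A\ra((A\ra B)\ra B)$ (with $A:=p\ra q$, $B:=p$); this schema is pure implicational logic, derivable from the $(\textbf{i})$-axioms alone by two applications of the deduction theorem to modus ponens.

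With these in hand, the derivation from the premise $P(p,q)\ra q$ proceeds in three steps: compose the premise with $(\alpha)$ via transitivity of $\ra$ (itself $\Int_{\textbf{i}}$-derivable) to obtain $p\ra q$; apply $(\beta)$ to the newly derived $p\ra q$ by modus ponens to obtain $P(p,q)$; finally apply the premise to $P(p,q)$ by modus ponens to obtain $q$. Every use of substitution occurs on an axiom or a previously derived theorem of $\Int_{\textbf{i}}$, never on the premise, so the derivation is admissible in the $\Vdash$ sense and the deduction theorem converts it into $\Int_{\textbf{i}}\vdash(P(p,q)\ra q)\ra q$.

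There is no real obstacle; the only mild twist worth noting is that the hypothesis $P(p,q)\ra q$ must be fired twice --- once implicitly (through $(\alpha)$ and transitivity) to extract the auxiliary $p\ra q$, and once explicitly at the end --- and that although $p\ra q$ is not itself a theorem of $\Int_{\textbf{i}}$, $(\beta)$ shows it already entails Peirce's law at the same $p,q$, which closes the loop.
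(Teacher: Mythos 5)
Your proof is correct, but it takes a genuinely different route from the paper. The paper argues semantically: it assumes $(P(p,q)\ra q)\ra q$ is refuted at a point of an intuitionistic Kripke model, unwinds the forcing conditions to reach a contradiction, concludes $\Int\vdash(P(p,q)\ra q)\ra q$, and only then invokes Horn's separation theorem for $\Int$ to restrict the derivation to the implicational axioms $(\textbf{i})$. You instead give a direct Hilbert-style derivation inside the implicational fragment: from the premise $P(p,q)\ra q$ you extract $p\ra q$ by chaining with the $\textbf{i}$-axiom instance $p\ra P(p,q)$, feed that into the assertion schema $(p\ra q)\ra P(p,q)$ to recover Peirce's law itself, and detach $q$; the deduction theorem (available from the two $(\textbf{i})$-axioms, and applicable since substitution is never performed on the premise) then yields the claim. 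All three ingredients check out: $(\alpha)$ is literally a substitution instance of $p\ra(q\ra p)$, $(\beta)$ is an instance of $A\ra((A\ra B)\ra B)$, and transitivity of $\ra$ is $\Int_{\textbf{i}}$-derivable. What your approach buys is self-containedness: it needs neither Kripke semantics nor the separation theorem for $\Int$, which is aesthetically pleasant given that the whole paper revolves around separation-type properties. What the paper's approach buys is brevity of verification at the cost of importing two external results.
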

\begin{proof}
	Our proof is semantical and uses Kripke semantics for $\Int$; see, e.g., \cite{cz97}.  Assume that in some intuitionistic Kripke model $(W,\le,\models)$ the formula $(P(p,q)\ra q)\ra q$ is refuted at a point $a\in W$. That is, $\models$ forces $P(p,q)\ra q$ to be true at $a$ and $q$ to be false at $a$. But this implies that $P(p,q)$ is also false at $a$.  The latter is only possible when there is a point/world $b\in W$ such $a\le b$, where, that is at $b$, $\models$ forces $(p\ra q)\ra p$ to be true and $p$ to be false. The latter implies that $p\ra q$ is also false at $b$. Thus there is $c\in W$ such that $b\le c$ where $p$ is true and $q$ is false.
	We note that, according to a well-known property, the formula $P(p,q)\ra q$, being true at $a$, is also true at any $x\in W$ such that $a\le x$. Applying this property, we obtain that $P(p,q)\ra q$ is true at $c$. Since $p$ is true at $c$, $P(p,q)$ is true at $c$ as well. The latter implies that $q$ is true at $c$. A contradiction.
	Thus $\Int\vdash(P(p,q)\ra q)\ra q$ and, in view of the separation property for $\Int$, (see, e.g.,~\cite{hor62})
	$\Int_{\textbf{i}}\vdash (P(p,q)\ra q)\ra q$ is true as well.
\end{proof}
\begin{cor}
	{\em $\Int_{\textbf{i}}\vdash (P(p,q)\ra q)\leftrightarrow q$}.
\end{cor}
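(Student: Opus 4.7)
The plan is to observe that the biconditional decomposes into two implications, one of which is precisely the content of the immediately preceding lemma, while the other is a single-step consequence of an axiom. Thus the corollary is essentially a packaging result, and I do not expect any real obstacle.

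The forward direction $(P(p,q)\ra q)\ra q$ is exactly Lemma~\ref{L:P(p,q)-imp-q-imp-q} and requires no additional work. For the converse direction $q\ra(P(p,q)\ra q)$, I would note that it is a substitution instance of the first axiom of group~(\textbf{i}), namely $p\ra(q\ra p)$, obtained by substituting $q$ for the variable $p$ and the formula $P(p,q)$ for the variable $q$. Since (uniform) substitution is one of the two inference rules available in every calculus of the paper, this substitution instance is derivable in $\Int_{\textbf{i}}$.

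To assemble the biconditional, I would read $\alpha\leftrightarrow\beta$ as the standard abbreviation and combine the two implications. If $\leftrightarrow$ is taken to involve $\wedge$, then one invokes conjunction introduction (which formally lies in $\Int_{\textbf{ic}}$, as in the worked examples in Section~\ref{S:languages}); alternatively, reading $\leftrightarrow$ at the meta-level as the joint assertion of both implications keeps the derivation entirely within $\Int_{\textbf{i}}$, which is consistent with how the statement is phrased.

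There is no genuine difficulty: the only nontrivial content is the semantic Kripke argument of the preceding lemma, and the converse implication is a one-line axiomatic move. The proof should therefore be about two lines, citing Lemma~\ref{L:P(p,q)-imp-q-imp-q} and the axiom $p\ra(q\ra p)$.
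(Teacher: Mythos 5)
Your proposal is correct and matches the argument the paper intends (the corollary is stated without proof precisely because it reduces to the preceding lemma for one direction and a substitution instance of $p\ra(q\ra p)$ for the other). Your aside about whether $\leftrightarrow$ pulls in $\wedge$ and hence the group (\textbf{c}) axioms is a fair observation, but it does not affect the substance.
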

\begin{lem}\label{L:p-implies-P(q,p)}
	{\em $\Int_{\textbf{i}}\vdash p\ra P(q,p)$}.
\end{lem}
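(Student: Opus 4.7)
The plan is to reduce the target formula $p\ra P(q,p) = p\ra(((q\ra p)\ra q)\ra q)$ to an auxiliary deducibility from two premises and then discharge them by the deduction theorem. Concretely, I would first prove
\[
\Int_{\textbf{i}} + p,\, (q\ra p)\ra q \Vdash q,
\]
and then apply the deduction theorem twice to obtain $\Int_{\textbf{i}}\vdash p\ra(((q\ra p)\ra q)\ra q)$. Because everything happens in the implicational fragment and the premises are used without substitution, the $\Vdash$-machinery set up in Section~\ref{S:languages} applies without modification.

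The auxiliary derivation is immediate: from the premise $p$ and the first $(\textbf{i})$-axiom $p\ra(q\ra p)$, modus ponens yields $q\ra p$; combining this with the other premise $(q\ra p)\ra q$ by modus ponens gives $q$. Only one axiom of group $(\textbf{i})$ and two applications of modus ponens are used, so the result lives in $\Int_{\textbf{i}}$ as required.

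I do not anticipate a substantive obstacle. In contrast to Lemma~\ref{L:P(p,q)-imp-q-imp-q}, there is no need to invoke Kripke semantics or the separation property; the proof is a short, purely syntactic Hilbert-style derivation. The only bookkeeping point is to keep the derivation in $\Vdash$-form, which is exactly what enables the two uses of the deduction theorem at the end.
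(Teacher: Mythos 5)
Your proof is correct and follows exactly the route the paper takes: the paper's entire proof is the observation that $\Int_{\textbf{i}}+\lbrace p,\,(q\ra p)\ra q\rbrace\Vdash q$, with the deduction theorem left implicit. Your explicit two-step derivation (obtaining $q\ra p$ from $p$ via the axiom $p\ra(q\ra p)$, then $q$ by modus ponens) is precisely the ``obvious'' derivation the paper has in mind.
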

\begin{proof}
	It is obvious that
	\[
	\Int_{\textbf{i}}+\lbrace p, (q\ra p)\ra q\rbrace\Vdash q.
	\]
\end{proof}
\begin{lem}\label{L:negative-occurrence}
	{\em$
		\Int_{\textbf{i}}\vdash (q\ra p)\ra(((r\ra p)\ra r)\ra((r\ra q)\ra r)).
		$}
\end{lem}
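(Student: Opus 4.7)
The plan is to reduce the claim via the deduction theorem to a short implicational derivation. Applying the deduction theorem three times to $\Int_{\textbf{i}}$-derivations \emph{without substitution}, it suffices to establish
\[
\Int_{\textbf{i}} + q\ra p,\ (r\ra p)\ra r,\ r\ra q \Vdash r.
\]
This is the real content, and it is a handful of steps: since hypothetical syllogism, that is $(q\ra p)\ra((r\ra q)\ra(r\ra p))$, is derivable in $\Int_{\textbf{i}}$ (a standard consequence of the two axioms of group $(\textbf{i})$), two applications of modus ponens from $q\ra p$ and $r\ra q$ produce $r\ra p$. One further application of modus ponens with the premise $(r\ra p)\ra r$ then yields $r$.

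To lay this out in the template used elsewhere in the paper, I would write the derivation as:
\[
\begin{array}{cl}
(1) & q\ra p,\ (r\ra p)\ra r,\ r\ra q\quad(\text{premises})\\
(2) & (q\ra p)\ra((r\ra q)\ra(r\ra p))\quad(\text{derivable in }\Int_{\textbf{i}})\\
(3) & r\ra p\quad(\text{from (1) and (2) by two applications of modus ponens})\\
(4) & r\quad(\text{from (1) and (3) by modus ponens})
\end{array}
\]
and then invoke the deduction theorem three times to discharge the three premises in turn.

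There is no substantive obstacle: the entire argument lives inside the purely implicational fragment, and all substitutions occur on already-derivable formulas (the implicational axioms and the hypothetical syllogism in line $(2)$), which is precisely what the w.s.\ protocol permits. The only minor bookkeeping point is that the premises themselves must not be substituted into, so the instance of hypothetical syllogism in $(2)$ has to be produced beforehand with the specific variables $p,q,r$ of the goal.
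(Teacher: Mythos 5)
Your proposal is correct and matches the paper's proof essentially verbatim: both reduce the claim via the deduction theorem to $\Int_{\textbf{i}}+\lbrace q\ra p,(r\ra p)\ra r,r\ra q\rbrace\Vdash r$ and then apply hypothetical syllogism (the paper uses the permuted form $(r\ra q)\ra((q\ra p)\ra(r\ra p))$, an immaterial difference) followed by modus ponens. No issues.
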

\begin{proof}
	We prove that
	\[
	\Int_{\textbf{i}}+\lbrace q\ra p,(r\ra p)\ra r,r\ra q\rbrace\Vdash r.
	\]
	Indeed, we have:
	
	\[
	\begin{array}{cll}
	(1) &r\ra q &(\text{premise})\\
	(2) &q\ra p &(\text{premise})\\
	(3) &(r\ra q)\ra((q\ra p)\ra(r\ra p) &(\text{deducible in}~\Int_{\textbf{i}})\\
	(4) &r\ra p &(\text{from $(1), (2), (3)$ by modus ponens twice})\\
	(5) &(r\ra p)\ra r &(\text{premise})\\
	(6) &r &(\text{from $(4)$ and $(5)$ by modus ponens})
	\end{array}
	\]
\end{proof}
\begin{cor}\label{C:A_alpha}
Given a formula $\alpha$, let $A_{\alpha}$ be the assertoric formula obtained from $\alpha$ by deleting of all occurrences of $\square$ in $\alpha$. Then  $\E\vdash\alpha$ implies $\Int\vdash A_{\alpha}$.
\end{cor}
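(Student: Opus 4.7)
The plan is to translate any $\E$-derivation of $\alpha$ into an $\Int$-derivation of $A_\alpha$ by applying the $\square$-erasing operation $\beta\mapsto A_\beta$ line by line. My first step is to record the structural properties of this operation: it is a map from $\Lanm$-formulas to $\Lana$-formulas that commutes with every assertoric connective (in particular $A_{\beta\ra\gamma}=A_\beta\ra A_\gamma$) and sends $\square\beta$ to $A_\beta$. A parallel check shows that $A$ commutes with substitution, in the sense that for any substitution $\sigma$ of $\Lanm$-formulas, $A_{\sigma(\beta)}$ is obtained from $A_\beta$ by the induced assertoric substitution $p\mapsto A_{\sigma(p)}$.

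The second step is to verify that the $A$-image of every axiom of $\E$ is derivable in $\Int$. The axioms in groups $(\textbf{i}),(\textbf{c}),(\textbf{d}),(\textbf{n})$ contain no occurrence of $\square$, so they are their own $A$-images and are already $\Int$-axioms. For the three \textbf{m}-axioms, erasure yields $(p\ra q)\ra(p\ra q)$, $p\ra p$, and $p\ra P(q,p)$ respectively. The first two are immediate, while the third is exactly Lemma~\ref{L:p-implies-P(q,p)}; this is why that lemma, rather than anything stronger, was singled out in advance.

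The third step transfers the inference rules. Modus ponens on $\beta$ and $\beta\ra\gamma$ becomes modus ponens on $A_\beta$ and $A_\beta\ra A_\gamma$ by the homomorphism property, and uniform substitution transfers thanks to the commutation recorded above. Concatenating all of this, the sequence of $A$-images of an $\E$-derivation of $\alpha$ can be spliced, at every appeal to an \textbf{m}-axiom instance, with the short $\Int$-derivation of the corresponding erased axiom (followed by a substitution), producing a bona fide $\Int$-derivation of $A_\alpha$. I do not anticipate any genuine obstacle; the whole argument is bookkeeping around Lemma~\ref{L:p-implies-P(q,p)}.
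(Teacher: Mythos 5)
Your proof is correct and is exactly the argument the paper intends: the corollary is stated without proof, but its placement immediately after Lemma~\ref{L:p-implies-P(q,p)} signals precisely your observation that erasing $\square$ turns the three \textbf{m}-axioms into $(p\ra q)\ra(p\ra q)$, $p\ra p$, and $p\ra P(q,p)$, after which the line-by-line translation (with erasure commuting with substitution and modus ponens) is routine. Nothing is missing.
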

\begin{lem}\label{L:(13)-inKM}
		The following holds$\,:$
	{\em\begin{align*}
		\Int_{\textbf{i}}\vdash(p\ra r)\ra(((r\ra p)\ra p)\ra((((r\ra q)\ra r)\ra(s\ra r))\\
		\ra((q\ra p)\ra (s\ra r)))).
		\end{align*}
	}
\end{lem}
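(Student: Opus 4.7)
The plan is to apply the deduction theorem (available for $\Vdash$ in $\Int_{\textbf{i}}$) four times, reducing the goal to the auxiliary deducibility
\[
\Int_{\textbf{i}}+\{p\ra r,\ (r\ra p)\ra p,\ ((r\ra q)\ra r)\ra(s\ra r),\ q\ra p\}\Vdash s\ra r.
\]
The structure of the statement suggests that Lemma~\ref{L:negative-occurrence} is to be applied at the original variables $p,q,r$, so no substitution will be required in the derivation from premises (keeping the $\Vdash$-regime honest).

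The body of the derivation I foresee goes as follows. First, from the hypothesis $(r\ra p)\ra p$ together with $p\ra r$ I form $(r\ra p)\ra r$, using a transitivity-of-implication theorem of $\Int_{\textbf{i}}$ (namely $(x\ra y)\ra((y\ra z)\ra(x\ra z))$ instantiated at $x:=r\ra p$, $y:=p$, $z:=r$, followed by two applications of modus ponens). Next, taking the $p,q,r$-instance of Lemma~\ref{L:negative-occurrence}, i.e.\
\[
(q\ra p)\ra(((r\ra p)\ra r)\ra((r\ra q)\ra r)),
\]
I apply modus ponens with the premise $q\ra p$ and then with the just-derived $(r\ra p)\ra r$ to obtain $(r\ra q)\ra r$. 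A final modus ponens against the premise $((r\ra q)\ra r)\ra(s\ra r)$ yields $s\ra r$.

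The only subtlety to keep an eye on is the discipline of the without-substitution regime: each appeal to a theorem of $\Int_{\textbf{i}}$ must be at the specific variables $p,q,r,s$ appearing in the goal, so that substitution is used only on formulas already derivable in $\Int_{\textbf{i}}$ (as permitted by $\Vdash$). This is automatic here because the instance of Lemma~\ref{L:negative-occurrence} we need uses exactly the variables $p,q,r$ of the lemma statement, and the transitivity step is a substitution instance of a pure $\Int_{\textbf{i}}$-theorem. I do not expect any real obstacle: the formula to be proved has been engineered to be exactly the deduction-theorem closure of a four-step propositional derivation, and the main ingredient, Lemma~\ref{L:negative-occurrence}, is available off the shelf.
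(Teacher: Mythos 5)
Your proof is correct and follows essentially the same route as the paper: reduce by the deduction theorem to $\Int_{\textbf{i}}+\{p\ra r,\,(r\ra p)\ra p,\,((r\ra q)\ra r)\ra(s\ra r),\,q\ra p\}\Vdash s\ra r$, then derive $(r\ra q)\ra r$ via Lemma~\ref{L:negative-occurrence} and finish with modus ponens. Your explicit intermediate step deriving $(r\ra p)\ra r$ from $(r\ra p)\ra p$ and $p\ra r$ by transitivity is in fact needed before Lemma~\ref{L:negative-occurrence} can be applied, and it makes precise a detail that the paper's displayed derivation passes over (its step (3) cites only $q\ra p$ and $(r\ra p)\ra p$, which do not literally match the lemma's antecedents).
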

\begin{proof}
	We prove that
	\begin{align*}
	\Int_{\textbf{i}}+\lbrace p\ra r,(r\ra p)\ra p,
	((r\ra q)\ra r)\ra(s\ra r),\\
	q\ra p\rbrace\Vdash s\ra r.
	\end{align*}
	Indeed, we obtain:
	\[
	\begin{array}{cll}
	(1) &q\ra p &(\text{premise})\\
	(2) &(r\ra p)\ra p &(\text{premise})\\
	(3) &(r\ra q)\ra r &(\text{from (1), (2), and Lemma~\ref{L:negative-occurrence} by modus ponens twice})\\
	(4) & ((r\ra q)\ra r)\ra(s\ra r) &(\text{premise})\\
	(5) &s\ra r &(\text{from (3) and (4) by modus ponens})\\
	\end{array}
	\]
\end{proof}

\begin{defn}[refined derivation]
A derivation in a calculus having substitution rule as a postulated rule of inference is called refined if all substitutions, if any, are applied only to the axioms occurring in the derivation and/or to premises, if the derivation has any premises.
\end{defn}

We note that, according to~\cite{sob74} (see also~\cite{lam79}), any derivation in each calculus defined above can be made refined.
\begin{lem}\label{L:main}
	Let $D:\E+\lambda\vdash\sigma$ be a refined derivation. Then for the formulas $\square\alpha$, $\square\beta$ and $\square(\alpha\ra\beta)$, which occur in $D$ as antecedents of the axioms
	of {\em (\textbf{m})} or as the consequent of $(\square\alpha\ra\square\beta)$ which is an inference of the first axiom of {\em (\textbf{m})}, there are corresponding formulas $\boxdot\alpha$, $\boxdot\beta$, and $\boxdot(\alpha\ra\beta)$ such that with corresponding replacements the following deducibility holds:
	\begin{equation}\label{E:main-lemma}
	\begin{array}{r}
	\Intm+\lambda[\square\alpha:\boxdot\alpha,
	\dots\square\beta:\boxdot\beta,\dots\square(\alpha\ra\beta):\boxdot(\alpha\ra\beta)\dots]\\
	+\lbrace\boxdot\alpha\ra P(\boxdot\beta,\alpha),\dots\rbrace
	+\lbrace\boxdot(\alpha\ra\beta)\ra P(\boxdot\beta,\alpha\ra\beta),\dots\rbrace\\
	\Vdash\sigma[\square\alpha:\boxdot\alpha,\dots
	\square\beta:\boxdot\beta,\dots
	\square(\alpha\ra\beta):\boxdot(\alpha\ra\beta)\dots].
	\end{array}
	\end{equation}
\end{lem}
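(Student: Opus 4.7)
The proof should go by induction on the length of the refined derivation $D$, simultaneously building the assignment $\square\gamma\mapsto\boxdot\gamma$ as we walk through $D$. Since $D$ is fixed and finite, I would first scan it once to collect every marked $\square$-formula together with the companion substituted for $q$ in the $(\textbf{m})$-axiom $\square p\ra(((q\ra p)\ra q)\ra q)$ in which it participates; this list determines which specific instances of the hypotheses $\boxdot\alpha\ra P(\boxdot\beta,\alpha)$ and $\boxdot(\alpha\ra\beta)\ra P(\boxdot\beta,\alpha\ra\beta)$ on the left of~\eqref{E:main-lemma} will be needed. A natural concrete choice, made consistent by bottom-up recursion over the finite list of marked formulas, is to set $\boxdot\gamma:=P(\boxdot\beta,\gamma)$ for the companion $\beta$ canonically associated with $\gamma$; with this choice the $(\textbf{m})$-axiom $p\ra\square p$ collapses, after replacement, to an instance of Lemma~\ref{L:p-implies-P(q,p)}, and the canonical $(\textbf{m})$-axiom instance $\square p\ra(((q\ra p)\ra q)\ra q)$ becomes the identity.

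Next I would verify each case of the induction. Substitution instances of axioms in $(\textbf{i})$--$(\textbf{n})$ and of premises of $\lambda$ survive the replacement unchanged, because the refined-derivation assumption confines substitution to the leaves, so each such line remains a substitution instance of the same assertoric axiom or of the correspondingly replaced premise already present in the expanded premise set. Modus ponens commutes with the replacement and is discharged by the inductive hypothesis on the two parents. Any $(\textbf{m})$-instance $\square p\ra(((q\ra p)\ra q)\ra q)$ with a non-canonical companion matches one of the added hypotheses directly, and $p\ra\square p$ is handled via Lemma~\ref{L:p-implies-P(q,p)} as noted.

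The only step that requires real work, and thus the main obstacle, is the $(\textbf{m})$-axiom $\square(p\ra q)\ra(\square p\ra\square q)$: after replacement I must derive $\boxdot(\alpha\ra\beta)\ra(\boxdot\alpha\ra\boxdot\beta)$ in $\Intm$ from the hypotheses $\boxdot(\alpha\ra\beta)\ra P(\boxdot\beta,\alpha\ra\beta)$ and $\boxdot\alpha\ra P(\boxdot\beta,\alpha)$ that appear on the left of~\eqref{E:main-lemma}. I would settle this by invoking Lemma~\ref{L:(13)-inKM}, threading its four variables $p,q,r,s$ through $\beta,\alpha\ra\beta,\boxdot\beta,\boxdot\alpha$ (or a similar matching), together with Lemma~\ref{L:P(p,q)-imp-q-imp-q} to strip off the outer Peirce-wrapper supplied by the first hypothesis. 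This is exactly the step that dictates both the shape of the auxiliary hypotheses in~\eqref{E:main-lemma} and the peculiar arrangement of variables in the statement of Lemma~\ref{L:(13)-inKM}.
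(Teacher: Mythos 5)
Your overall strategy coincides with the paper's: transform the refined derivation line by line, replace each marked $\square$-formula $\square\gamma$ by a Peirce-law formula $\boxdot\gamma$, note that non-modal axiom instances, premises and modus ponens survive the replacement, re-derive the replaced instances of $p\ra\square p$ via Lemma~\ref{L:p-implies-P(q,p)} and of the third \textbf{m}-axiom from the definition of $\boxdot\gamma$, and treat the replaced normality axiom $\boxdot(\alpha\ra\beta)\ra(\boxdot\alpha\ra\boxdot\beta)$ as the crux, deriving it from the two displayed hypothesis families via Lemma~\ref{L:(13)-inKM} (the correct instantiation is $p=\beta$, $q=\alpha$, $r=\boxdot\beta$, $s=\boxdot\alpha$, not $q=\alpha\ra\beta$) together with the auxiliary facts $(\boxdot\beta\ra\beta)\ra\beta$ and $\beta\ra\boxdot\beta$.

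The gap is in your definition of $\boxdot\gamma$. Setting $\boxdot\gamma:=P(\boxdot\beta,\gamma)$ for a single ``canonical'' companion is not well-defined: the companion $\beta$ (the formula substituted for $q$ in $\square p\ra(((q\ra p)\ra q)\ra q)$) need not itself be a marked $\square$-formula, so $\boxdot\beta$ may not exist; and even reading $\boxdot\beta$ as ``$\beta$ after replacement'', the companion may contain $\square\gamma$ itself, or the companion relation may contain cycles, so your ``bottom-up recursion'' has no guaranteed base. Moreover, a single derivation can contain several instances $\square\gamma\ra P(\beta_1,\gamma),\dots,\square\gamma\ra P(\beta_k,\gamma)$ sharing the antecedent $\square\gamma$; privileging one companion forces you to carry the remaining replaced instances as extra hypotheses whose shape does not match the families displayed in \eqref{E:main-lemma}, which would then break the discharge step in the proof of Proposition~\ref{P:E-and-Int-equipollent}. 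The paper avoids all of this with the non-recursive definition $\boxdot\gamma:=\bigwedge_{1\le j\le k}P(\beta_j,\gamma)[\square\gamma:\true]$ (and $\boxdot\gamma:=\true$ when there are no such instances): the inner substitution $[\square\gamma:\true]$ breaks self-reference, the conjunction turns every replaced third-axiom instance into a conjunction elimination, and the two properties that drive the rest of the argument, $(\boxdot\gamma\ra\gamma)\ra\gamma$ (from Lemma~\ref{L:P(p,q)-imp-q-imp-q}) and $\gamma\ra\boxdot\gamma$ (from Lemma~\ref{L:p-implies-P(q,p)}), are preserved under conjunction. You should also state these two properties explicitly, since both are needed in the normality-axiom case and the second is exactly what handles $p\ra\square p$.
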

\begin{proof}
Suppose
\begin{equation}
D:\gamma_1,\ldots,\gamma_n,
\end{equation}
where $\gamma_n=\sigma$.
	
Let us denote by $M(D)$ the set of $\square$-formulas
which occur in $D$ as antecedents of the axioms
of  (\textbf{m}) or as a consequent $(\square\alpha\ra\square\beta)$ of the first axiom of  (\textbf{m}).

For each $\square\alpha\in M(D)$, let
\begin{equation}\label{E:3rd-m-axiom-instances}
\square\alpha\ra P(\beta_1,\alpha),\dots,
\square\alpha\ra P(\beta_k,\alpha)
\end{equation}
be all instances of the last axiom of (\textbf{m})
occurring in $D$
that start with $\square\alpha$. We define
\[
\boxdot\alpha=\left\{
	\begin{array}{cl}
	\bigwedge_{1\le j\le k} P(\beta_j,\alpha)[\square\alpha:\true]&\text{if \eqref{E:3rd-m-axiom-instances} is not empty}\\
	\true &\text{if \eqref{E:3rd-m-axiom-instances} is empty}.
	\end{array}\right.
\]

It is obvious that, providing that \eqref{E:3rd-m-axiom-instances} is not empty,
\begin{equation}\label{E:one}
\IntmExt{ic}\vdash\boxdot\alpha\ra P(\beta_j,\alpha),
\end{equation}
where $1\le j\le k$. Next we show that
\begin{equation}\label{E:two}
\IntmExt{ic}\vdash(\boxdot\alpha\ra\alpha)\ra\alpha.
\end{equation}
According to Lemma~\ref{L:P(p,q)-imp-q-imp-q},
\[
\begin{array}{l}
\IntmExt{ic}\vdash(P(\beta_1,\alpha)\ra\alpha)\ra\alpha,\\
\IntmExt{ic}\vdash(P(\beta_2,\alpha)\ra\alpha)\ra\alpha,\\
\dotfill \\
\IntmExt{ic}\vdash(P(\beta_k,\alpha)\ra\alpha)\ra\alpha.\\
\end{array}
\]
Then, we obtain:
\[
\begin{array}{l}
\IntmExt{ic}\vdash((P(\beta_1,\alpha)\wedge P(\beta_2,\alpha)\ra\alpha)\ra(P(\beta_1,\alpha)\ra(P(\beta_2,\alpha)\ra\alpha)),\\
\IntmExt{ic}\vdash((P(\beta_1,\alpha)\wedge P(\beta_2,\alpha)\ra\alpha)\ra(P(\beta_1,\alpha)\ra
\alpha),\\
\IntmExt{ic}\vdash((P(\beta_1,\alpha)\wedge P(\beta_2,\alpha)\ra\alpha)\ra\alpha,\\
\dotfill\\
\IntmExt{ic}\vdash(\bigwedge_{1\le j\le k}P(\beta_j,\alpha)\ra\alpha)\ra\alpha.
\end{array}
\]

Also, by virtue of Lemma~\ref{L:p-implies-P(q,p)},
we have:
\begin{equation}\label{E:three}
\IntmExt{ic}\vdash\alpha\ra\boxdot\alpha.
\end{equation}

Next we prove that if both $\square\alpha,\square\beta\in M(D)$, then
\begin{equation}\label{E:four}
\IntmExt{ic}+\boxdot\alpha\ra P(\boxdot\beta,\alpha)\Vdash
(\alpha\ra\beta)\ra
(\boxdot\alpha\ra\boxdot\beta).
\end{equation}

Indeed, according to Lemma~\ref{L:(13)-inKM},
\begin{align*}
\IntmExt{i}\vdash((\beta\ra\boxdot\beta)\ra(((\boxdot\beta\ra\beta)\ra \beta)\\
\ra((((\boxdot\beta\ra\alpha)\ra \boxdot\beta)\ra(\boxdot\alpha\ra\boxdot\beta))
\ra((\alpha\ra\beta)\ra (\boxdot\alpha\ra\boxdot\beta)))). 
\end{align*}
By virtue of \eqref{E:two} and \eqref{E:two},
\begin{align*}
\IntmExt{ic}\vdash(((\boxdot\beta\ra\alpha)\ra \boxdot\beta)\ra(\boxdot\alpha\ra\boxdot\beta))
\ra((\alpha\ra\beta)\ra (\boxdot\alpha\ra\boxdot\beta)), 
\end{align*}
that is
\[
\IntmExt{ic}\vdash(\boxdot\alpha\ra P(\boxdot\beta,\alpha))\ra ((\alpha\ra\beta)\ra (\boxdot\alpha\ra\boxdot\beta)).
\]
The latter is equivalent to
\begin{align*}
\IntmExt{ic}+\boxdot\alpha\ra P(\boxdot\beta,\alpha)
\Vdash(\alpha\ra\beta)\ra (\boxdot\alpha\ra\boxdot\beta).
\end{align*}

Now suppose $\square(\alpha\ra\beta)\in M(D)$ as the antecedent of the first axiom of (\textbf{m}) (in which case also both $\square\alpha,\square\beta\in M(D)$). Then, we prove,
\begin{equation}\label{E:five}
\begin{array}{r}
\IntmExt{ic}+\lbrace\boxdot\alpha\ra P(\boxdot\beta,\alpha),\boxdot(\alpha\ra \beta)\ra P(\boxdot\beta,\alpha\ra\beta)\rbrace\\
\Vdash
\boxdot(\alpha\ra\beta)\ra (\boxdot\alpha\ra\boxdot\beta).
\end{array}
\end{equation}
We will be proving that
\begin{align*}
\IntmExt{ic}+\lbrace\boxdot\alpha\ra P(\boxdot\beta,\alpha),
\boxdot(\alpha\ra \beta)\ra P(\boxdot\beta,\alpha\ra\beta),\\
\boxdot(\alpha\ra\beta)\rbrace\Vdash
\boxdot\alpha\ra\boxdot\beta.
\end{align*}
Indeed, we obtain:
\[
\begin{array}{cll}
(1) & \boxdot(\alpha\ra\beta) &(\text{premise})\\
(2) & \boxdot(\alpha\ra \beta)\ra P(\boxdot\beta,\alpha\ra\beta) &(\text{premise})\\
(3) & (((\boxdot\beta\ra(\alpha\ra\beta))\ra\boxdot\beta)\ra\boxdot\beta) &(\text{from (1) and (2) by modus ponens})\\
(4) &(\boxdot\beta\ra(\alpha\ra\beta))\ra(\alpha\ra
(\boxdot\beta\ra\beta))
& (\text{deducible in $\IntmExt{i}$})\\
(5) & (\boxdot\beta\ra\beta)\ra\beta &(\text{by virtue of~\eqref{E:two}, since $\square\beta\in M(D)$})\\
(6) & (\boxdot\beta\ra(\alpha\ra\beta))\ra(\alpha\ra \beta)
&(\text{from (4) and (5) deducible in $\IntmExt{i}$})\\
(7) &\boxdot\alpha\ra P(\boxdot\beta,\alpha)
&(\text{premise})\\
(8) & (\alpha\ra\beta)\ra(\boxdot\alpha\ra\boxdot\beta) 
&(\text{by virtue of~\eqref{E:four}})\\
(9) & (\boxdot\beta\ra(\alpha\ra\beta))\ra(\boxdot\alpha\ra
\boxdot\beta)
&(\text{from (6) and (8) deducible in $\Intm_{\textbf{i}}$})\\
(10) & \boxdot\alpha\ra ((\boxdot\beta\ra(\alpha\ra \beta))\ra\boxdot\beta)
&(\text{from (9) deducible in $\Intm_{\textbf{i}}$})\\
(11) &\boxdot\alpha\ra\boxdot\beta  &(\text{from (3) and (10) deducible in $\Intm_{\textbf{i}}$})
\end{array}
\]

Now for each $i$, $1\le i\le n$, we define:
\[
\gamma_{i}^{\ast}:=\gamma_i[\alpha:\boxdot\alpha,\dots,\square\beta:\boxdot\beta,\dots,\square(\alpha\ra\beta):\boxdot(\alpha\ra\beta),\dots]
\]
and further
\[
[\gamma_{i}^{\ast}]:=\left\{
\begin{array}{cl}
\gamma_{i}^{\ast} &\text{if $\gamma_i$ is not an instance of an \textbf{m}-axiom}\\
\text{a $\IntmExt{ic}$-derivation according to \eqref{E:one}} &\text{if $\gamma_i$ is an instance of the third \textbf{m}-axiom}\\
\text{a $\IntmExt{ic}$-derivation according to \eqref{E:three}} &\text{if $\gamma_i$ is an instance of the second \textbf{m}-axiom}\\
\text{a $\IntmExt{ic}$-derivation according to \eqref{E:five}} &\text{if $\gamma_i$ is an instance of the first \textbf{m}-axiom}.\\
\end{array}
\right.
\]

Now let us consider
\[
D^{\ast}:[\gamma_{1}^{\ast}],\ldots,[\gamma_{n}^{\ast}].
\]
One can see that $D^{\ast}$ supports the deducibility \eqref{E:main-lemma}.
\end{proof}
\begin{prop}\label{P:E-and-Int-equipollent}
The calculi $\E$ and $\Int$ are assertorically equipollent; that is for any assertoric formulas $A$ and $B$,
\[
\E+A\vdash B\Longleftrightarrow\Int+A\vdash B.
\]
\end{prop}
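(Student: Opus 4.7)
The easy direction $\Int+A\vdash B\Rightarrow\E+A\vdash B$ is immediate: every axiom of $\Int$ is an axiom of $\E$ and the inference rules coincide, so any $\Int$-derivation is already an $\E$-derivation.

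For the nontrivial direction, my plan routes the problem through the deduction theorem and the $\square$-erasure map $\alpha\mapsto A_{\alpha}$ of Corollary~\ref{C:A_alpha}. Suppose $\E+A\vdash B$ with $A,B$ assertoric. By the refinement theorem cited immediately before Lemma~\ref{L:main}, the witnessing derivation may be taken refined, so that substitution is applied only to $\E$-axioms and to the premise $A$. Reorganize this derivation so that all substitutions on $A$ happen up front, producing finitely many premise-instances $A_{1}:=A^{\sigma_{1}},\ldots,A_{n}:=A^{\sigma_{n}}$; the remainder uses only $\E$-axiom instances and modus ponens. Applying the standard deduction theorem at this stage yields
\[
\E\vdash A_{1}\wedge\cdots\wedge A_{n}\to B.
\]

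Now Corollary~\ref{C:A_alpha} lets us erase $\square$'s; since $B$ is assertoric and the erasure map commutes with $\wedge$ and $\to$, we obtain
\[
\Int\vdash A_{A_{1}}\wedge\cdots\wedge A_{A_{n}}\to B.
\]
Because $A$ is itself assertoric, $A_{A^{\sigma_{i}}}$ equals $A^{\sigma_{i}'}$ with $\sigma_{i}'(p):=A_{\sigma_{i}(p)}$ an assertoric substitution, so each $A_{A_{i}}$ is again an assertoric substitution instance of $A$. Thus in $\Int+A$ we derive each $A_{A_{i}}$ by the substitution rule, conjoin them, and apply modus ponens to the $\Int$-theorem just obtained, concluding $\Int+A\vdash B$.

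The main obstacle is the opening reorganization: certifying that a refined $\E$-derivation can indeed be split so that all substitutions on the premise $A$ are performed first, yielding a finite set of premise-instances from which the conclusion is reached by axiom-instances and modus ponens alone. This is essentially routine once one notes that each axiom instance is itself derivable by a single substitution from a fixed axiom (so substitution uses can be pushed to the very start of the derivation), but the bookkeeping should be spelled out. A more syntactic alternative via Lemma~\ref{L:main} applied with $\lambda=A$ and $\sigma=B$ (so the replacements $[\square\alpha:\boxdot\alpha,\ldots]$ act trivially on $A,B$) yields $\Intm+A+\Gamma\Vdash B$ with extra hypotheses of the shape $\boxdot\alpha\to P(\boxdot\beta,\alpha)$; discharging these after $\square$-erasure requires showing each is intuitionistically derivable, which fails in the degenerate case $\boxdot\alpha=\true$, so the deduction theorem route appears more expedient.
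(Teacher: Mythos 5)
Your argument is correct, but it takes a genuinely different --- and considerably more elementary --- route than the paper. Both proofs begin identically: pass to a refined derivation and regard the premise as a conjunction $\lambda$ of substitution instances of $A$ (this is exactly the ``reorganization'' you worry about; in a refined derivation each substitution applied to the premise produces some $A^{\sigma_i}$ directly from $A$ and depends on nothing else, so promoting these steps to the front is indeed routine, and the paper does it silently by writing ``$D:\E+\lambda\vdash B$ \dots where $\lambda$ is the conjunction of instances of $A$''). From there you finish in two moves: the deduction theorem gives $\E\vdash\lambda\ra B$, and Corollary~\ref{C:A_alpha} plus the observation that $\square$-erasure commutes with substitution (so $A_{\lambda}$ is again a conjunction of assertoric instances of $A$) gives $\Int+A\vdash B$. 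The paper instead pushes the entire derivation through Lemma~\ref{L:main}: each $\square\alpha$ occurring in an instance of an (\textbf{m})-axiom is replaced by a conjunction $\boxdot\alpha$ of Peirce-law formulas $P(\beta_j,\alpha)$ tailored to the instances actually present in $D$, producing an $\Intm$-derivation from $\lambda^{\ast}$ together with auxiliary hypotheses $\boxdot\alpha\ra P(\boxdot\beta,\alpha)$; a second replacement round $\boxdot\gamma\mapsto\boxtimes\gamma$ then renders those hypotheses derivable in $\IntmExt{ic}$ --- this second round, not erasure, is how the paper escapes the degenerate-case obstruction you point out at the end --- and only then is Corollary~\ref{C:A_alpha} invoked. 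For the statement of Proposition~\ref{P:E-and-Int-equipollent} your route is shorter and loses nothing. What the paper's heavier machinery buys is a constructive simulation of the modal axioms inside the implication--conjunction fragment, which keeps track of which connectives and axiom groups are actually used; that is the kind of information needed for the separation property of Conjecture~\ref{conjecture-1}, and it is precisely what wholesale $\square$-erasure destroys.
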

\begin{proof}
Suppose $D:\E+\lambda\vdash B$ is a refined derivation, where $\lambda$ is the conjunction of instances of $A$. According to Lemma~\ref{L:main}, 
\begin{equation}\label{E:last}
\begin{array}{r}
D^{\ast}:\Intm+\lambda[\square\alpha:\boxdot\alpha,
\dots\square\beta:\boxdot\beta,\dots\square(\alpha\ra\beta):\boxdot(\alpha\ra\beta)\dots]\\
+\lbrace\boxdot\alpha\ra P(\boxdot\beta,\alpha),\dots\rbrace
+\lbrace\boxdot(\alpha\ra\beta)\ra P(\boxdot\beta,\alpha\ra\beta),\dots\rbrace\\
\Vdash B,
\end{array}
\end{equation}
for some formulas $\boxdot\alpha,\dots\boxdot\beta\dots\boxdot(\alpha\ra\beta),\dots$

We denote
\[
\lambda^{\ast}:=\lambda[\square\alpha:\boxdot\alpha,
\dots\square\beta:\boxdot\beta,\dots\square(\alpha\ra\beta):\boxdot(\alpha\ra\beta)\dots].
\]
Now let
\begin{equation}\label{E:last+1}
\boxdot\gamma\ra P(\beta_1,\gamma),\ldots,
\boxdot\gamma\ra P(\beta_m,\gamma)
\end{equation}
be all formulas of the second row of \eqref{E:last} which begin with $\boxdot\gamma$. We note that each $\beta_j$ does not contain $\boxdot\gamma$. Next, for each such a formula $\boxdot\gamma$, we define:
\[
\boxtimes\gamma:=\bigwedge_{1\le j\le m}P(\beta_j,\gamma).
\]
We observe that
\begin{equation}\label{E:last+2}
\IntmExt{ic}\vdash\boxdot\gamma\ra P(\beta_1,\gamma)[\boxdot\gamma:\boxtimes\gamma,\dots]
\end{equation}
Let
\[
D^{\ast}: \gamma_1,\ldots,\gamma_n.
\]
Then, we define
\[
\gamma^{\ast}_{i}:=\gamma_i[\boxdot\gamma:\boxtimes\gamma,\dots]
\]
and
\[
[\gamma^{\ast}_{i}]=\left\{
\begin{array}{cl}
\gamma^{\ast}_{i} &\text{if $\gamma_{i}$ is not one of~\eqref{E:last+1}}\\
\text{a derivation supported by~\eqref{E:last+2}}
&\text{if $\gamma_{i}$ is one of~\eqref{E:last+1}}.
\end{array}
\right.
\]
Denoting
\[
D^{\ast\ast}: [\gamma^{\ast}_{1}],\ldots,[\gamma^{\ast}_{1}],
\]
we observe that $\Intm+\lambda^{\ast\ast}\Vdash B$, where $\lambda^{\ast\ast}:=\lambda^{\ast}[\boxdot\gamma:\boxtimes\gamma,\dots]$. Thus we have $\Intm\vdash\lambda^{\ast}\ra B$. By virtue of Corollary~\ref{C:A_alpha}, $\Int\vdash A_{\lambda^{\ast\ast}\rightarrow B}$. We note that
$A_{\lambda^{\ast\ast}\rightarrow B}=C\ra  B$, where $C$ is conjunction of instances of $A$. Hence $\Int+A\vdash B$.
\end{proof}
\begin{cor}
Any assertoric formula is derivable in $\E$ if it is also derivable by using only axioms of the group {\em 
	(\textbf{i})} and those ones in the groups {\em 
	$(\textbf{c})-(\textbf{n})$} which correspond to the logical connectives actually appearing in the formula
\end{cor}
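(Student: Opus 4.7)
The plan is to reduce this corollary directly to the separation property for the intuitionistic propositional calculus $\Int$ via the equipollence established in Proposition~\ref{P:E-and-Int-equipollent}. The statement is essentially a ``limited'' separation property for $\E$: it applies only to assertoric formulas, and it transfers separation from $\Int$ across the equipollence. The nontrivial direction is: if $\E\vdash A$ for an assertoric formula $A$, then $A$ has a derivation using only axioms from group~(\textbf{i}) plus those axioms from the groups $(\textbf{c})$--$(\textbf{n})$ whose principal connective occurs in $A$.

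First, I would apply Proposition~\ref{P:E-and-Int-equipollent} in the unconditional form, taking the premise $A$ to be, say, the tautology $\true$ (or simply noting that the bi-deducibility specializes to premise-free derivations). This gives the equivalence
\[
\E\vdash A \Longleftrightarrow \Int\vdash A
\]
for any assertoric formula $A$. Next I would invoke the classical separation theorem for $\Int$ due to Horn~\cite{hor62}, which states that every $\Int$-derivable formula admits a derivation using only the axiom group~(\textbf{i}) together with the groups among $(\textbf{c})$--$(\textbf{n})$ corresponding to connectives that actually occur in the formula. Chaining these two facts yields the desired conclusion.

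The converse direction is immediate: if $A$ is derivable from a subset of the axioms of $\Int$, then $A$ is derivable in $\Int$, and hence (by Proposition~\ref{P:Int-Int-square-interconnection} or simply by axiom inclusion) also in $\E$. Since $\E$ contains all axioms of $\Int$ as an axiom schema reading, no further work is required here.

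The main obstacle is conceptual rather than technical: one must be careful that the appeal to Proposition~\ref{P:E-and-Int-equipollent} is legitimate in the premise-free setting, and that the groups named in the corollary correspond exactly to those named in Horn's separation theorem for $\Int$. Once these bookkeeping matters are settled, the proof consists essentially of two sentences, and no new syntactic manipulation beyond what Lemma~\ref{L:main} and Proposition~\ref{P:E-and-Int-equipollent} have already supplied is needed.
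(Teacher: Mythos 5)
Your proposal is correct and matches the paper's evident intent: the corollary is stated without proof immediately after Proposition~\ref{P:E-and-Int-equipollent}, and the intended argument is exactly the one you give, namely specializing the equipollence to the premise-free case to get $\E\vdash A\Longleftrightarrow\Int\vdash A$ for assertoric $A$ and then invoking Horn's separation theorem for $\Int$~\cite{hor62}. The only remark worth adding is that the forward reduction $\E\vdash A\Rightarrow\Int\vdash A$ is also available even more directly from Corollary~\ref{C:A_alpha}, since $A_{A}=A$ for an assertoric formula.
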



\begin{thebibliography}{10}
	
	\bibitem{cz97}
	Alexander Chagrov and Michael Zakharyaschev.
	\newblock {\em Modal logic}, volume~35 of {\em Oxford Logic Guides}.
	\newblock The Clarendon Press, Oxford University Press, New York, 1997.
	\newblock Oxford Science Publications.
	
	\bibitem{esa06}
	Leo Esakia.
	\newblock The modalized {H}eyting calculus: a conservative modal extension of
	the intuitionistic logic.
	\newblock {\em J. Appl. Non-Classical Logics}, 16(3-4):349--366, 2006.
	
	\bibitem{hor62}
	Alfred Horn.
	\newblock The separation theorem of intuitionist propositional calculus.
	\newblock {\em J. Symbolic Logic}, 27:391--399, 1962.
	
	\bibitem{ho73}
	Tsutomu Hosoi and Hiroakira Ono.
	\newblock Intermediate propositional logics (a survey).
	\newblock {\em J. Tsuda College}, (5):67--82, 1973.
	
	\bibitem{kle52}
	Stephen~Cole Kleene.
	\newblock {\em Introduction to metamathematics}.
	\newblock D. Van Nostrand Co., Inc., New York, N. Y., 1952.
	
	\bibitem{km80}
	A.~V. Kuznetsov and A.~Y. Muravitsky.
	\newblock Provability as modality.
	\newblock In {\em Current problems in logic and methodology of sciences}, pages
	193--–230. ``Naukova Dumka’’, Kiev, 1980.
	\newblock Russian.
	
	\bibitem{km86}
	A.~V. Kuznetsov and A.~Yu. Muravitsky.
	\newblock On superintuitionistic logics as fragments of proof logic extensions.
	\newblock {\em Studia Logica}, 45(1):77--99, 1986.
	
	\bibitem{lam79}
	Charles~H. Lambros.
	\newblock A shortened proof of {S}oboci\'nski's theorem concerning a restricted
	rule of substitution in the field of propositional calculi.
	\newblock {\em Notre Dame J. Formal Logic}, 20(1):112--114, 1979.
	
	\bibitem{mur83}
	A.~Yu. Muravitski{\u\i}.
	\newblock Extensions of the logic of provability.
	\newblock {\em Mat. Zametki}, 33(6):915--927, 1983.
	
	\bibitem{mur85}
	A.~Yu. Muravitski{\u\i}.
	\newblock Correspondence of proof-intuitionistic logic extensions to
	proof-logic extensions.
	\newblock {\em Dokl. Akad. Nauk SSSR}, 281(4):789--793, 1985.
	
	\bibitem{mur14b}
	Alexei Muravitsky.
	\newblock Logic {K}{M}: A biography.
	\newblock In Guram Bezhanishvili, editor, {\em Leo Esakia on Duality in Modal
		and Intuitionistic Logics}, volume~4 of {\em Outstanding Contributions to
		Logic}, pages 155--185. Springer Netherlands, 2014.
	
	\bibitem{mur16a}
	Alexei~Y. Muravitsky.
	\newblock On the equipollence of the calculi {I}nt and {K}{M}.
	\newblock {\em Math. Log. Quarterly}.
	\newblock (submitted).
	
	\bibitem{sob74}
	Boles{\l}aw Soboci{\'n}ski.
	\newblock A theorem concerning a restricted rule of substitution in the field
	of propositional calculi. {I}, {II}.
	\newblock {\em Notre Dame J. Formal Logic}, 15:465--476; ibid. 15 (1974),
	589--597, 1974.
	
\end{thebibliography}

\end{document}